\documentclass[a4paper,11pt]{amsart}

\baselineskip=12pt

\evensidemargin= 0 cm \oddsidemargin= 0 cm \topmargin -0.5cm
\textheight 23.5cm \textwidth 16.3cm

\usepackage{graphicx}
\usepackage{amsmath,amssymb,amsthm,amsfonts}
\usepackage{amssymb}

\usepackage{dsfont}
\usepackage{color, framed}

\usepackage[colorlinks]{hyperref}
\hypersetup{linkcolor=blue,citecolor=blue,filecolor=black,urlcolor=blue}

\newtheorem{thm}{Theorem}[section]
\newtheorem{theorem}[thm]{Theorem}
\newtheorem{cor}[thm]{Corollary}
\newtheorem{lem}[thm]{Lemma}
\newtheorem{prop}[thm]{Proposition}
\newtheorem{proposition}[thm]{Proposition}
\theoremstyle{definition}
\newtheorem{rem}[thm]{Remark}
\newtheorem{remark}[thm]{Remark}

\numberwithin{equation}{section}

\newcommand{\bremark}{\begin{rem} \textup}
\newcommand{\eremark}{\end{rem} }

\newcommand{\cuad}{{\sqcap\kern-.68em\sqcup}}

\newcommand{\la}{\lambda}

\newcommand{\R}{{\mathbb{R}}}

\renewcommand{\rho}{\varrho}
\renewcommand{\theta}{\vartheta}
\newcommand{\pa}{\partial}
\newcommand{\eps}{\varepsilon}
\newcommand{\s}{\sigma}
\renewcommand{\S}{\mathbb{S}}

\begin{document}

\subjclass[2000]{35J70; 35J62; 35B06}

\parindent 0pc
\parskip 6pt
\overfullrule=0pt

\title[Monotonicity and rigidity for elliptic systems]{Monotonicity and rigidity of solutions to some elliptic systems with uniform limits}

\author {Alberto Farina, Berardino Sciunzi and Nicola Soave}

%\date{\today}

\address{
\hbox{\parbox{5.7in}{\medskip\noindent
Alberto Farina \\
LAMFA, CNRS UMR 7352, Universit\'e de Picardie Jules Verne,\\
33 rue Saint-Leu, 80039 Amiens (France) \\[2pt]
{\em{E-mail address: }}{\tt alberto.farina@u-picardie.fr} \\[5 pt]
Berardino Sciunzi\\
Dipartimento di Matematica e Informatica, UNICAL,
Ponte Pietro Bucci 31B, 87036 Arcavacata di Rende, Cosenza (Italy),\\[2pt]
{\em{E-mail address: }}{\tt sciunzi@mat.unical.it} \\ [5pt]
Nicola Soave\\
Dipartimento di Matematica, Politecnico di Milano, \\
Piazza Leonardo da Vinci, 32, 20133 Milano (Italy) \\[2pt]
{\em{E-mail address: }}{\tt nicola.soave@gmail.com, nicola.soave@polimi.it}}}}

\keywords{Rigidity and symmetry results; elliptic systems; Liouville-type theorems.}

\subjclass[2010]{Primary: 35B08, 35B06, 35B53, 35J47.}

\thanks{\em{Acknowledgements:} Alberto Farina and Berardino Sciunzi are partially supported by ERC grant n. 277749 ``Elliptic Pde's and Symmetry of Interfaces and Layers for Odd Nonlinearities - EPSILON". Alberto Farina and Nicola Soave are partially supported through the project ERC grant 2013 n. 339958 ``Complex Patterns for Strongly Interacting Dynamical Systems - COMPAT''. Nicola Soave is partially supported through the project PRIN-2015KB9WPT\texttt{\char`_}010 Grant: ``Variational methods, with applications to problems in mathematical physics and geometry", and by the INDAM-GNAMPA project ``Aspetti non-locali in fenomeni di segregazione".}

\begin{abstract}
In this paper we prove the validity of Gibbons' conjecture for a coupled competing Gross-Pitaevskii system. We also provide sharp a priori bounds, regularity results and additional Liouville-type theorems.
\end{abstract}

\maketitle

\date{\today}

%\date{}

%END TOPMATTER

\maketitle

\section{introduction}
This paper concerns the study of the qualitative properties, with particular emphasis to symmetry and monotonicity, of non-negative solutions to the elliptic system
\begin{equation}\tag{$P$}\label{system}
\begin{cases}
-\Delta u \, =u-u^3-\Lambda uv^2& \text{in}\quad\mathbb{R}^N  \\
-\Delta v \, =v-v^3-\Lambda u^2v& \text{in}\quad\mathbb{R}^N  \\
\quad u,v \ge 0 &  \text{in}\quad\mathbb{R}^N, 
\end{cases}\qquad \text{with $\Lambda > 0$.}
\end{equation}
As discussed in \cite{AftSou,AlaBro1,DroMal,Mal} (see also the references therein), the problem under investigation with $\Lambda>1$ arises in the study of domain walls and interface layers of two-components Bose-Einstein condensates in the segregation regime. Motivated by the physical interpretation, in \cite{AlaBro1} it is proved that \eqref{system} in dimension $N=1$ has a solution satisfying the conditions
\begin{equation}\label{limit 1 d}
\begin{split}
&u(t) \to 1 \qquad v(t) \to 0 \qquad \text{as $t \to +\infty$} \\
&u(t) \to 0 \qquad v(t) \to 1 \qquad \text{as $t \to -\infty$}, \\
\end{split}
\end{equation}
that is, an heteroclinic connection between the equilibria $(0,1)$ and $(1,0)$. This solution satisfies the monotonicity condition
\begin{equation}\label{monot prop 1d}
u'>0 \quad \text{and} \quad v'<0 \quad \text{in $\R$},
\end{equation}
is spectrally and nonlinearly stable (see \cite{AlaBro1}), and is unique (modulo translation) within the class of solutions with a monotone component (see \cite{AftSou}). Furthermore, in \cite{AftSou} precise asymptotic estimates in the limit $\Lambda \to +\infty$ are provided. 

By the study of the $1$-dimensional problem, and by the results in \cite{AftSou}, it emerges a deep connection between problem \eqref{system} and 
\begin{equation}\label{syst solito}
\begin{cases}
\Delta U \, =U V^2& \text{in}\quad\mathbb{R}^N  \\
\Delta V \, =U^2 V& \text{in}\quad\mathbb{R}^N  \\
U, V \ge 0 &  \text{in}\quad\mathbb{R}^N.
\end{cases}
\end{equation}
Firstly, also \eqref{syst solito} in $\R$ has a unique stable solution (modulo translations and scalings) which satisfies the same monotonicity property as in \eqref{monot prop 1d}, and has limits
\[
\begin{split}
&U(t) \to +\infty \qquad V(t) \to 0 \qquad \text{as $t \to +\infty$} \\
&U(t) \to 0 \qquad V(t) \to +\infty \qquad \text{as $t \to -\infty$},
\end{split}
\]
see \cite{BeLiWeZh, BeTeWaWe}. Secondly, it is shown in \cite{AftSou,SoZiP} that such $1$-dimensional solution appears as limit in a suitable blow-up analysis near the regular part of the interface of solutions to \eqref{system} in the \emph{strong competition regime} $\Lambda \to +\infty$.

Problem \eqref{syst solito} has been intensively studied in the last years also in higher dimension, starting from the seminal paper \cite{BeLiWeZh}. We refer to \cite{BeTeWaWe, SoZi1, SoZi2} for existence, and to \cite{BeTeWaWe, FarSyst, FaSo, NoTaTeVe, SoTe, Wa1, Wa2} for classification results. In particular, in \cite{FaSo} the authors proved that in any dimension $N \ge 2$, a solution to \eqref{syst solito} with at most polynomial growth at infinity and satisfying
\[
\begin{split}
&U(x',x_N) \to +\infty \qquad V(x',x_N) \to 0 \qquad \text{as $x_N \to +\infty$} \\
&U(x',x_N) \to 0 \qquad V(x',x_N) \to +\infty \qquad \text{as $x_N \to -\infty$}, \\
\end{split}
\]
the limit being uniform in $x' \in \R^{N-1}$, depends only on $x_N$. This gives an affirmative answer to a conjecture raised in \cite{BeLiWeZh}, which is the natural counterpart of the famous \emph{Gibbons' conjecture} for the Allen-Cahn equation, for which we refer to \cite{BaBaGu, BeHaMo, Fa99}.

Motivated by the $1$-dimensional analysis carried on in \cite{AftSou, AlaBro1}, and 
inspired by the results in \cite{FaSo}, in this paper we address the following issue: is it true that, in any dimension $N \ge 1$, any solution to \eqref{system} satisfying the condition \eqref{limit 1 d} as $x_N \to \pm\infty$, uniformly in the other variables, depends only on $x_N$? The first of our main results is the positive answer to this question.

\begin{theorem}\label{thm: main 1}
Let $N \ge 1$, $\Lambda >1$, and let $(u,v) \in L^3_{loc}(\R^N) \times  L^3_{loc}(\R^N)$ be a distributional solution of \eqref{system}, satisfying the assumption
\begin{equation}\tag{$h_\infty$}\label{uniform limit}
\begin{split}
&u(x',x_N) \to 1 \qquad v(x',x_N) \to 0 \qquad \text{as $x_N \to +\infty$} \\
&u(x',x_N) \to 0 \qquad v(x',x_N) \to 1 \qquad \text{as $x_N \to -\infty$}, 
\end{split}
\end{equation}
uniformly with respect to $x'\in \mathbb{R}^{N-1}$. Then $(u,v)$ is smooth, depends only on $x_N$, and
\[
\pa_{N} u >0, \quad \pa_N v <0 \quad \text{in $\R^N$}.
\]
\end{theorem}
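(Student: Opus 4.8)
The plan is to combine elliptic a priori estimates with a sliding method adapted to the competitive structure of~\eqref{system}, in the spirit of the proof of Gibbons' conjecture for the Allen--Cahn equation and, closer to the present setting, of the analogous result for the singular system~\eqref{syst solito} obtained in~\cite{FaSo}. As a preliminary reduction, I would first show that any distributional solution of~\eqref{system} satisfying~\eqref{uniform limit} is smooth and obeys $0\le u,v\le1$ in $\R^N$: smoothness by a bootstrap argument (the cubic terms enter with an absorption sign, so local $L^3$ integrability is enough to iterate through Calder\'on--Zygmund and Schauder estimates), and the bounds by the maximum principle. For instance, if $\sup_{\R^N}u=M>1$ then, by~\eqref{uniform limit}, $M$ is not approached as $x_N\to\pm\infty$, so translating in $x'$ and using interior estimates produces a classical solution $(\bar u,\bar v)$ with $\bar u\le M$ everywhere and $\bar u(x_0)=M$ at some $x_0$; the first equation at $x_0$, where $-\Delta\bar u(x_0)\ge0$, forces $M^2+\Lambda\bar v(x_0)^2\le1$, a contradiction. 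Since moreover $u\not\equiv0\not\equiv v$ and $-\Delta u+\Lambda u\ge0$, $-\Delta v+\Lambda v\ge0$ on $\{0\le u,v\le1\}$, the strong maximum principle yields $u,v>0$ in $\R^N$.

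Two structural facts drive the sliding. First,~\eqref{system} is \emph{cooperative} with respect to the partial order $(u_1,v_1)\preceq(u_2,v_2)\iff u_1\le u_2$ and $v_1\ge v_2$, because $\partial_v(u-u^3-\Lambda uv^2)=-2\Lambda uv\le0$ and $\partial_u(v-v^3-\Lambda u^2v)=-2\Lambda uv\le0$ on $\{u,v\ge0\}$. Second, linearising~\eqref{system} at the equilibrium $(1,0)$ produces, up to coupling terms that vanish there, the decoupled operators $-\Delta+2$ and $-\Delta+(\Lambda-1)$, and symmetrically at $(0,1)$; this is exactly where the hypothesis $\Lambda>1$ is used, since it makes both operators coercive (and, likewise, forces $u$ to decay exponentially to $0$ as $x_N\to-\infty$ and $v$ as $x_N\to+\infty$). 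From these facts one obtains a comparison principle in unbounded domains: there is $\delta_0>0$ such that, if $(u_i,v_i)$ $(i=1,2)$ solve the equations on a half-space $Q=\{\pm(x_N-a)>0\}$, each within $L^\infty$-distance $\delta_0$ of the equilibrium attached to $Q$, with $(u_1,v_1)\preceq(u_2,v_2)$ on $\partial Q$ and $(u_2-u_1,v_1-v_2)$ bounded, then $(u_1,v_1)\preceq(u_2,v_2)$ in $Q$ --- a maximum principle for $-\Delta+c$ with $c\ge c_0>0$, the small off-diagonal terms being absorbed by the coercivity.

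With these tools the core step is to prove, by sliding, that the translates $(u^\tau,v^\tau)(x):=(u,v)(x',x_N+\tau)$ satisfy $(u,v)\preceq(u^\tau,v^\tau)$ for every $\tau\ge0$, so that $\partial_N u\ge0\ge\partial_N v$. Outside a fixed slab $\{|x_N|\le R\}$ the uniform limits~\eqref{uniform limit} place $(u,v)$ and $(u^\tau,v^\tau)$ near the relevant equilibrium, so the comparison principle applies on the two complementary half-spaces; in the slab $\{|x_N|\le R\}$, where $(u,v)$ is bounded away from all equilibria, one argues by compactness --- translating in $x'$, extracting a limiting pair of solutions (still subject to~\eqref{uniform limit}) and invoking the strong maximum principle for the cooperative system --- to push the continuity/sliding argument down to $\tau=0$. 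Once $u$ is non-decreasing and $v$ non-increasing in $x_N$, repeating the scheme for the tilted translations $(u,v)(\,\cdot\,+s(e_N+\theta e_i))$ with $\theta>0$ small (every translate again satisfies~\eqref{uniform limit}) and letting $\theta\to0$ forces $\partial_i u=\partial_i v=0$ for $i=1,\dots,N-1$; hence $(u,v)=(u,v)(x_N)$ (when $N=1$ this symmetry step is vacuous). Finally, in one variable $\partial_N u$ is a non-negative, not identically zero supersolution of the linear equation obtained by differentiating the first equation --- its right-hand side being $-2\Lambda uv\,\partial_N v\ge0$ --- so the strong maximum principle gives $\partial_N u>0$, and symmetrically $\partial_N v<0$.

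The main obstacle is the sliding step itself: making the comparison principle work on \emph{unbounded} domains --- precisely where the assumption $\Lambda>1$ is indispensable, through the coercivity of the linearisations at $(1,0)$ and $(0,1)$ --- and, in the transition slab $\{|x_N|\le R\}$, compensating for the lack of compactness in the $x'$ variables by the translation--compactness plus strong-maximum-principle argument, which is what prevents the sliding from stopping before $\tau=0$.
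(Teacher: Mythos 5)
Your overall architecture (a priori bounds, monotonicity in $x_N$ via a comparison scheme exploiting $\Lambda>1$, then tilting directions to get one-dimensionality) matches the paper's, and your observation that the pair $(u_2-u_1,\,v_1-v_2)$ satisfies a cooperative system is exactly the structure the paper exploits. The genuine gap is in the core sliding step. Your comparison principle requires \emph{both} pairs to lie within $\delta_0$ of the equilibrium attached to the half-space and to be ordered on its boundary, and you claim this is available on the two half-spaces $\{x_N>R\}$ and $\{x_N<-R\}$. It is not: on $\{x_N<-R\}$ the translate $(u^\tau,v^\tau)(x)=(u,v)(x',x_N+\tau)$ is close to $(0,1)$ only where $x_N<-R-\tau$; on the strip $-R-\tau<x_N<-R$ it can be anywhere (indeed close to $(1,0)$ near the top when $\tau$ is large), so the mean-value coefficients along the segment joining the two pairs need not be coercive there, and the off-diagonal competitive terms are not small. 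Moreover, even on the upper half-space the ordered boundary data on $\{x_N=R\}$ is not known a priori in the translation formulation --- it is part of what one is trying to prove. This is precisely the ``how to start'' difficulty the paper emphasizes, and it is the reason the paper uses \emph{reflections} rather than translations: comparing $(u_\lambda,v_\lambda)$ with $(u,v)$ only on $\Sigma_\lambda=\{x_N>\lambda\}$ makes the boundary condition automatic (equality on the hyperplane) and confines the whole comparison to a region where $u\ge\sqrt{2/3}$, $\Lambda u^2>1+\e$ and $\|v\|_{L^\infty(\Sigma_\lambda)}$ is small, so that the coupling can be absorbed; the absorption itself is carried out not by a pointwise maximum principle but by the integral device of Lemma \ref{bLe:L(R)} (from \cite{FMS}) in Lemma \ref{monot at infinity}. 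As written, your scheme has no mechanism for the region where one of the two pairs is far from the equilibrium, so the sliding cannot even start; fixing it would require either switching to the reflection formulation or importing a substantially more elaborate argument (in the spirit of Berestycki--Hamel--Monneau for the scalar case), neither of which is sketched.

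A secondary gap is the preliminary reduction. Starting from $(u,v)\in L^3_{loc}\times L^3_{loc}$, a plain Calder\'on--Zygmund/Schauder bootstrap does not get off the ground in high dimension (the right-hand side is only in $L^1_{loc}$, and $W^{2,1}_{loc}$ regularity does not improve on $L^3_{loc}$), and your maximum-principle argument for $u\le 1$ presupposes $\sup u<\infty$, which is not known at that stage. The paper's route (Theorem \ref{universal upper bound}(i)) is to use the absorption sign \emph{first}, via Kato's inequality applied to $(u-1)^+$ together with the Liouville-type Lemma~2 of \cite{Brezis}, obtaining $|u|,|v|\le 1$ directly from the distributional formulation, and only then smoothness by elliptic estimates; your allusion to the absorption sign points in this direction but the step needs to be made explicit in this order.
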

Here and in the rest of the paper, we use the common notation $x=(x',x_N) \in \R^{N-1} \times \R$ for points of $\R^N$, and we denote by $\pa_N$ the partial derivative with respect to $x_N$. In general, in what follows we always write $\pa_\nu w$ to denote the directional derivative of a function $w$ in a direction $\nu \in \mathbb{S}^{N-1}$. 

As byproduct of Theorem \ref{thm: main 1}, we obtain the uniqueness (modulo translations) of the positive 1D heteroclinic connections between $(0,1)$ and $(1,0)$, without any additional assumption on $(u,v)$. For minimal solutions (in a suitable sense), this result was conjectured in \cite[Section 5]{AlaBro1}, and proved in \cite{AftSou} as a consequence of \cite[Theorem 1.3]{AftSou}, where it is showed the uniqueness of positive 1D heteroclinic connections with one monotone component. Here we can remove the monotonicity assumption and extend the uniqueness in any dimension.

\begin{cor}\label{cor: uniqueness}
For $N \ge 1$ and $\Lambda >1$, there exists exactly one distributional solution $(u,v) \in L^3_{loc}(\R^N) \times  L^3_{loc}(\R^N)$ of \eqref{system} satisfying \eqref{uniform limit}, modulo translations.
\end{cor}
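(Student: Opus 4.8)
The plan is to derive Corollary~\ref{cor: uniqueness} as an essentially immediate consequence of Theorem~\ref{thm: main 1}, combined with the known one-dimensional theory recalled in the introduction. First I would invoke Theorem~\ref{thm: main 1}: any distributional solution $(u,v)\in L^3_{loc}(\R^N)\times L^3_{loc}(\R^N)$ of \eqref{system} satisfying \eqref{uniform limit} is automatically smooth and depends only on $x_N$, so $(u,v)=(u(x_N),v(x_N))$ with $u'>0$ and $v'<0$ on $\R$. Thus it suffices to prove that, for $\Lambda>1$, there is exactly one (modulo translation) solution of the one-dimensional system $-u''=u-u^3-\Lambda u v^2$, $-v''=v-v^3-\Lambda u^2 v$ on $\R$ with $u,v\ge 0$ satisfying \eqref{limit 1 d}. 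Existence is already guaranteed by Theorem~\ref{thm: main 1} together with the existence statement from \cite{AlaBro1} recalled above, so the whole content of the corollary reduces to the uniqueness of the heteroclinic connection in dimension one.

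For uniqueness, I would appeal to \cite[Theorem 1.3]{AftSou}, which gives uniqueness (modulo translation) of positive one-dimensional heteroclinic connections between $(0,1)$ and $(1,0)$ \emph{within the class of solutions having one monotone component}. The key observation is that, thanks to Theorem~\ref{thm: main 1}, this monotonicity restriction is no longer a restriction at all: every solution of \eqref{system} that satisfies \eqref{uniform limit} — and in particular every one-dimensional solution satisfying \eqref{limit 1 d}, viewed as a solution in $\R^N$ constant in $x'$ — has $u'>0$ and $v'<0$, hence automatically lies in the class covered by \cite[Theorem 1.3]{AftSou}. Therefore any two solutions of \eqref{system}--\eqref{uniform limit} must, after reduction to one dimension via Theorem~\ref{thm: main 1}, coincide up to a translation in $x_N$; lifting back to $\R^N$ this is a translation of the whole solution, which proves the corollary.

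The only point requiring a little care — and the closest thing to an obstacle — is to make sure the hypotheses of \cite[Theorem 1.3]{AftSou} are met, namely that the uniform limit assumption \eqref{uniform limit} in $\R^N$ genuinely forces, after the dimensional reduction, the pointwise asymptotics \eqref{limit 1 d} in one dimension together with $u,v\ge 0$ and the monotonicity \eqref{monot prop 1d}. This is exactly what Theorem~\ref{thm: main 1} delivers, so no new analysis is needed; one just has to phrase the deduction correctly, noting that a solution depending only on $x_N$ is a bona fide one-dimensional solution and that its limits as $x_N\to\pm\infty$ are precisely \eqref{limit 1 d}. I would then record that combining existence (from \cite{AlaBro1} or, independently, from Theorem~\ref{thm: main 1} which shows the higher-dimensional problem has no \emph{new} solutions beyond the 1D ones) with the now-unconditional uniqueness from \cite{AftSou} yields Corollary~\ref{cor: uniqueness}. \cvd
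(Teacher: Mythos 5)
Your proposal is correct and follows essentially the same route as the paper: apply Theorem \ref{thm: main 1} to reduce to monotone one-dimensional heteroclinics, then conclude by the uniqueness (modulo translations) of such connections from \cite[Theorem 1.3]{AftSou}. The extra care you take about verifying the hypotheses of the 1D uniqueness result is fine but not a departure from the paper's argument.
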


The first step in the proof of Theorem \ref{thm: main 1} is the following universal estimate regarding any solution to \eqref{system} (not necessarily positive), with arbitrary $\Lambda  > 0$, which we think can be of independent interest.

\begin{thm}\label{universal upper bound}
Assume $ N\ge 1$ and let $(u,v) \in L^3_{loc}(\R^N) \times  L^3_{loc}(\R^N)$ be a distributional solution of 
\begin{equation}\label{systemUE}
\begin{cases}
-\Delta u \, =u-u^3-\Lambda uv^2& \text{in}\quad\mathbb{R}^N  \\
-\Delta v \, =v-v^3-\Lambda u^2v& \text{in}\quad\mathbb{R}^N.  
\end{cases}
\end{equation} 
It results that:
\begin{itemize}
\item [($i$)] If $ \Lambda > 0$, then $u$ and $v$ are smooth and satisfy
\[
\vert u \vert \le 1, \quad \vert v \vert \le 1 \qquad \text{in} \qquad \R^N.
\]
\item [($ii$)] If $ \Lambda \ge 1$, then
\[
u^2 + v^2  \le 1 \qquad \text{in} \qquad \R^N.
\]
\item [($iii$)] If $ \Lambda \in (0,1)$, then
\[
u^2 + v^2  \le  \frac{2}{\Lambda +1} \qquad \text{in} \qquad \R^N.
\]
\end{itemize}
\end{thm}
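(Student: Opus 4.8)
The plan is to prove Theorem \ref{universal upper bound} in three stages, following a bootstrap from the distributional setting to pointwise bounds, and then refining these bounds via algebraic manipulation of the system. First, for part $(i)$, I would establish regularity: starting from $(u,v) \in L^3_{loc} \times L^3_{loc}$, the right-hand sides $u - u^3 - \Lambda u v^2$ lie in $L^1_{loc}$ (indeed, the cubic terms are in $L^1_{loc}$ since $u, v \in L^3_{loc}$ and $uv^2 \in L^1_{loc}$ by H\"older), so elliptic $L^p$ theory and a standard iteration upgrade $(u,v)$ to $W^{2,p}_{loc}$ for any $p$, hence to $C^{1,\alpha}_{loc}$, and then Schauder bootstrapping gives smoothness. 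Once $u$ is known to be smooth, I would obtain $|u| \le 1$ by a Liouville-type / maximum-principle-on-$\R^N$ argument applied to $u$ alone: the key observation is that, wherever $v$ is real, the equation for $u$ gives $-\Delta u \le u - u^3$ on the set $\{u > 0\}$ (since $-\Lambda u v^2 \le 0$ there), and $-\Delta u \ge u - u^3$ on $\{u<0\}$. Applying the classical result that an entire subsolution of $-\Delta w = w - w^3$ which is bounded above (it is, being continuous — but since we do not yet know boundedness, one argues locally via a sweeping/comparison with the explicit barrier, or invokes the Keller--Osserman-type a priori bound for $-\Delta u \le u - u^3 \le 1 - $ [something], more precisely one uses that $-\Delta u \le C - u^3/2$ forces an upper bound depending only on $C$) yields $u \le 1$; replacing $u$ by $-u$ and using oddness of the nonlinearity gives $u \ge -1$. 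The same for $v$.

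For parts $(ii)$ and $(iii)$, the idea is to study the scalar function $w := u^2 + v^2$, which by $(i)$ is already known to be smooth and bounded (by $2$). Computing, $-\Delta w = 2u(-\Delta u) + 2v(-\Delta v) - 2|\nabla u|^2 - 2|\nabla v|^2$. Substituting the equations,
\[
-\Delta w = 2\bigl(u^2 + v^2\bigr) - 2\bigl(u^4 + v^4\bigr) - 4\Lambda u^2 v^2 - 2|\nabla u|^2 - 2|\nabla v|^2 \le 2w - 2\bigl(u^4 + v^4 + 2\Lambda u^2 v^2\bigr).
\]
Now I would bound the quartic bracket from below in terms of $w^2$. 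We have $u^4 + v^4 + 2\Lambda u^2 v^2 = (u^2+v^2)^2 + 2(\Lambda - 1) u^2 v^2 = w^2 + 2(\Lambda-1) u^2 v^2$. When $\Lambda \ge 1$, the last term is nonnegative, so $-\Delta w \le 2w - 2w^2 = 2w(1 - w)$, and an entire bounded subsolution of $-\Delta w \le 2w(1-w)$ must satisfy $w \le 1$ (again by the scalar comparison/Liouville argument: if $\sup w = M > 1$ one derives a contradiction, e.g. via the fact that $-\Delta w + 2w(w-1) \le 0$ with $w$ bounded forces $w \le 1$, a consequence of the maximum principle applied on large balls together with the strict negativity of the reaction for $w>1$, or one quotes a Keller--Osserman estimate). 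When $\Lambda \in (0,1)$, instead $2(\Lambda - 1) u^2 v^2 \ge 2(\Lambda-1)\, \tfrac{w^2}{4} = \tfrac{\Lambda-1}{2} w^2$ using $u^2 v^2 \le \tfrac14(u^2+v^2)^2$; hence $u^4+v^4+2\Lambda u^2v^2 \ge w^2 + \tfrac{\Lambda-1}{2}w^2 = \tfrac{\Lambda+1}{2}w^2$, giving $-\Delta w \le 2w - (\Lambda+1)w^2 = (\Lambda+1)w\bigl(\tfrac{2}{\Lambda+1} - w\bigr)$, and the same comparison argument then yields $w \le \tfrac{2}{\Lambda+1}$.

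The main obstacle is making the scalar ``entire bounded subsolution'' step fully rigorous without circular reasoning: one must be careful that in part $(i)$ the bound $|u| \le 1$ is derived \emph{before} knowing $u$ is bounded, so the relevant lemma is really the statement that any smooth function on $\R^N$ with $-\Delta u \le f(u)$, where $f(s) = s - s^3$ is negative for $s$ large, is bounded above by the largest zero of $f$ — this follows from a by-now-standard argument (truncate, compare on balls $B_R$ with the solution of $-\Delta \phi_R = \|f\|_\infty$, or more cleanly use the fact that $-\Delta u \le C_0 - c_0 u^2$ on $\{u \ge K\}$ implies a universal local bound via the Keller--Osserman / Brezis estimate $u(x_0) \le C(N, c_0, C_0)$, then let the radius go to infinity and feed the resulting global bound back to sharpen the constant to exactly $1$). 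I would isolate this as a short preliminary lemma and then invoke it verbatim in all three parts; once it is available, parts $(ii)$ and $(iii)$ are immediate given the algebra above. A secondary technical point is justifying the pointwise computation of $\Delta w$ and the differential inequalities in the classical sense, which is unproblematic once $(i)$ provides $C^\infty$ regularity, so I would prove $(i)$ first in full and only afterwards turn to $(ii)$--$(iii)$.
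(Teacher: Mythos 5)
There is a genuine gap, and it sits exactly at the point where you invert the paper's order of reasoning. You propose to establish smoothness \emph{first}, by a bootstrap starting from $(u,v)\in L^3_{loc}\times L^3_{loc}$, and only then run maximum-principle arguments. But that bootstrap does not close for $N\ge 3$: from $u,v\in L^3_{loc}$ you only know that the right-hand sides $u-u^3-\Lambda uv^2$, $v-v^3-\Lambda u^2v$ are in $L^1_{loc}$, and $\Delta u\in L^1_{loc}$ yields at best $u\in W^{1,p}_{loc}$ for $p<\tfrac{N}{N-1}$, hence $u\in L^q_{loc}$ only for $q<\tfrac{N}{N-2}$, which is no improvement over $L^3_{loc}$ when $N\ge 3$ (and strictly worse when $N\ge 4$); more generally, $u\in L^q_{loc}$ gives $u^3\in L^{q/3}_{loc}$ and two derivatives' worth of Sobolev gain returns an exponent that does not exceed $q=3$ in dimension $N\ge 3$, so the iteration stalls. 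Since your Keller--Osserman/comparison arguments for $|u|\le 1$ and for $w=u^2+v^2$ are all formulated for smooth (or at least classical) functions, the whole proof hinges on this unjustified first step. The paper resolves precisely this difficulty by reversing the order: it applies Kato's inequality directly at the distributional level (it only needs $u-1,\ \Delta(u-1)\in L^1_{loc}$) to get $\Delta (u-1)^+\ge [(u-1)^+]^2$, then invokes Brezis's Lemma 2 (an $L^1_{loc}$ Liouville statement for $\Delta w\ge w^2$-type inequalities, with no condition at infinity) to conclude $u\le 1$ a.e.; the bounds $|u|,|v|\le 1$ come first, and smoothness is then immediate from boundedness by standard elliptic estimates. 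Your remark that one must avoid circularity shows you sensed the issue, but the fix you sketch (Keller--Osserman applied to a smooth $u$) still presupposes the regularity you cannot obtain at that stage.

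Once part ($i$) is granted, your treatment of ($ii$) and ($iii$) is essentially the paper's: the same algebraic identities $u^4+v^4+2\Lambda u^2v^2=(u^2+v^2)^2+2(\Lambda-1)u^2v^2$ and, for $\Lambda\in(0,1)$, the lower bound $u^4+v^4+2\Lambda u^2v^2\ge \tfrac{\Lambda+1}{2}(u^2+v^2)^2$, leading to the differential inequalities for $w=u^2+v^2$. The only difference is the closing device: you invoke a scalar Liouville/comparison statement for bounded entire subsolutions of $-\Delta w\le c\,w(M-w)$ (which would need to be stated and proved carefully, e.g.\ by a translation-and-compactness argument using the system, since $w$ satisfies only an inequality), whereas the paper again uses Kato's inequality applied to $(u^2+v^2-1)^+$ resp.\ $(u^2+v^2-\tfrac{2}{\Lambda+1})^+$ together with the same Brezis lemma, which is both shorter and already available from step ($i$). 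I would suggest adopting the Kato--Brezis mechanism uniformly; it simultaneously repairs the regularity issue in ($i$) and replaces the unproved scalar comparison lemma in ($ii$)--($iii$).
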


\begin{remark}
1) The previous estimates are sharp. To see this, it is sufficient to observe that the constant solutions $(\pm 1,0)$ or $(0,\pm 1)$ realize the equality in items ($i$) and ($ii$), and $(\frac{1}{\sqrt{\Lambda +1}}, \frac{1}{\sqrt{\Lambda +1}})$ realizes the equality in item ($iii$). 

2) Regarding item ($i$), we note that, whenever we consider a nonconstant solution, we immediately gain the strict inequalities $|u|,|v| <1$ by the strong maximum principle.

3) By elliptic estimates, we can also obtain universal bounds (depending only on the dimension $N$) for all the derivatives of any solution to \eqref{system}.
\end{remark}

We also observe that item ($ii$) permits to weaken the assumptions of Theorem \ref{thm: main 1} in the following way:

\begin{cor}\label{cor: sharp}
Let $N \ge 1$, $\Lambda >1$, and let $(u,v) \in L^3_{loc}(\R^N) \times  L^3_{loc}(\R^N)$ be a distributional solution of \eqref{system}, satisfying the assumption
\begin{equation}\label{uniform limit sharp}
\lim_{x_N \to \pm \infty} \Big( u(x',x_N)- v(x',x_N) \Big) = \pm 1,
\end{equation}
uniformly with respect to $x'\in \mathbb{R}^{N-1}$. Then $(u,v)$ is smooth, satisfies \eqref{uniform limit}, depends only on $x_N$, and
\[
\pa_{N} u >0, \quad \pa_N v <0 \quad \text{in $\R^N$}.
\]
\end{cor}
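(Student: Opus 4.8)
The plan is to deduce Corollary \ref{cor: sharp} from Theorem \ref{thm: main 1} by showing that the weaker one-sided hypothesis \eqref{uniform limit sharp} together with the a priori bound of Theorem \ref{universal upper bound}($ii$) actually forces the full two-sided uniform convergence \eqref{uniform limit}. Once \eqref{uniform limit} is established, Theorem \ref{thm: main 1} applies verbatim and yields smoothness, one-dimensional symmetry, and the strict monotonicity of the components.

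The heart of the argument is the following elementary observation: since $\Lambda > 1$, Theorem \ref{universal upper bound}($ii$) gives $u^2 + v^2 \le 1$ pointwise, and since $u, v \ge 0$ we have $0 \le u, v \le 1$ everywhere. Now suppose $x_N \to +\infty$. By \eqref{uniform limit sharp}, $u(x',x_N) - v(x',x_N) \to 1$ uniformly in $x'$. Combining $u - v \to 1$ with $u \le 1$ forces $v(x',x_N) \to 0$ uniformly; and then $u = (u-v) + v \to 1$ uniformly as well. The symmetric argument handles $x_N \to -\infty$, where $u - v \to -1$ together with $v \le 1$ gives $u \to 0$ and hence $v \to 1$, all uniformly in $x'$. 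Thus \eqref{uniform limit} holds, and the conclusions follow from Theorem \ref{thm: main 1}.

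A minor point that should be addressed for full rigor is that Theorem \ref{universal upper bound}($ii$) is stated for distributional solutions of the system \eqref{systemUE} without sign constraints, so it applies a fortiori to the non-negative solution $(u,v)$ of \eqref{system}; in particular the smoothness asserted there (item ($i$)) already guarantees that the pointwise evaluations used above make sense. I do not expect any genuine obstacle here: the corollary is essentially a repackaging of Theorem \ref{thm: main 1} once one notices that the a priori estimate converts the single scalar limit \eqref{uniform limit sharp} into separate limits for $u$ and $v$. The only thing to be careful about is the uniformity of the convergence, which is preserved throughout since each step (subtracting a bounded-above quantity, adding back a quantity that tends uniformly to zero) respects uniform limits in $x'$.
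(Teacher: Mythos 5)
Your argument is correct and follows essentially the same route as the paper: the a priori bound $0\le u,v\le 1$ (the paper quotes $0<u,v<1$ from Theorem \ref{universal upper bound}($i$) and positivity, while you use item ($ii$), which is equally valid and matches the paper's own framing of the corollary) turns the single uniform limit of $u-v$ into the separate uniform limits \eqref{uniform limit}, after which Theorem \ref{thm: main 1} gives the conclusion. No gaps.
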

Clearly, this result gives also a stronger version of Corollary \ref{cor: uniqueness}, where assumption \eqref{uniform limit} is replaced by \eqref{uniform limit sharp}.

\bigskip

Let us focus now on the case $\Lambda \in (0,1)$. The existence of heteroclinic connection between ($0,1)$ and $(1,0)$ was not proved in this setting. We can show that this is natural, as specified by the following Liouville-type theorem. 

\begin{thm}\label{sopra<1}
Assume $ \Lambda \in (0,1)$, $N \ge 1$, and let $(u,v)$ be a solution of \eqref{system}:
\[
\begin{cases}
-\Delta u\,=u-u^3- \Lambda uv^2& \text{in}\quad\mathbb{R}^N  \\
-\Delta v\,=v-v^3- \Lambda u^2v& \text{in}\quad\mathbb{R}^N  \\
\quad u,v >0 & \text{in}\quad\mathbb{R}^N.
\end{cases}
\]
Then $ u = v = \frac{1} {{\sqrt {1+\Lambda}}}$.
\end{thm}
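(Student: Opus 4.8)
\textbf{Proof proposal for Theorem \ref{sopra<1}.}

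The plan is to exploit the universal bound $u^2+v^2\le \frac{2}{\Lambda+1}$ from item ($iii$) of Theorem \ref{universal upper bound} together with a suitable combination of the two equations, so as to obtain a subharmonic (or superharmonic) function that is bounded, and then invoke a Liouville-type argument. More precisely, I would first add the two equations to get
\[
-\Delta(u+v) = (u+v) - (u^3+v^3) - \Lambda uv(u+v),
\]
and I would like to show that the right-hand side has a sign, or at least can be controlled, once we know that $u,v>0$ and $u^2+v^2\le \frac{2}{\Lambda+1}$. A cleaner route is to work with $w := \frac{1}{\Lambda+1} - \frac{u^2+v^2}{2} \ge 0$; computing $-\Delta w$ using the equations and the identity $\Delta(\phi^2)=2\phi\Delta\phi+2|\nabla\phi|^2$, one finds
\[
-\Delta w = -u\Delta u - v\Delta v - |\nabla u|^2 - |\nabla v|^2 = u^2+v^2 - (u^4+v^4) - 2\Lambda u^2 v^2 - |\nabla u|^2 - |\nabla v|^2.
\]
The key algebraic point is then to check that, under the constraint $u^2+v^2 \le \frac{2}{\Lambda+1}$ and $\Lambda<1$, the expression $u^2+v^2-(u^4+v^4)-2\Lambda u^2v^2$ is nonnegative, with equality only at the equilibrium point $(u,v)=\big(\tfrac{1}{\sqrt{1+\Lambda}},\tfrac{1}{\sqrt{1+\Lambda}}\big)$; indeed $u^4+v^4+2\Lambda u^2v^2 = (u^2+v^2)^2 - 2(1-\Lambda)u^2v^2 \le (u^2+v^2)^2 \le \frac{2}{\Lambda+1}(u^2+v^2)$, but this only gives $u^2+v^2-(u^4+v^4)-2\Lambda u^2 v^2 \ge u^2+v^2\big(1-\tfrac{2}{\Lambda+1}\big) + 2(1-\Lambda)u^2v^2$, and the first term has the wrong sign, so a more careful case analysis of the quadratic form in $(u^2,v^2)$ on the disk is needed. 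Granting this, $w$ is a nonnegative superharmonic function on $\R^N$, hence (being also bounded) constant by the classical Liouville theorem for superharmonic functions on $\R^N$; constancy of $w$ forces $-\Delta w \equiv 0$, so $|\nabla u|\equiv|\nabla v|\equiv 0$ and the algebraic term vanishes identically, which pins down $(u,v)$ to the asserted constant.

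An alternative, and probably more robust, approach avoids the delicate algebra by using directly the $L^\infty$ bounds from Theorem \ref{universal upper bound}: since $u,v$ are bounded, all derivatives are bounded (Remark, part 3), and one can test the equations against the solutions themselves over large balls with suitable cut-offs. Multiplying the first equation by $u$, the second by $v$, integrating against $\eta^2$ with $\eta$ a standard cut-off supported in $B_{2R}$, summing, and using $u,v>0$ gives a Caccioppoli-type inequality
\[
\int \eta^2\big(|\nabla u|^2+|\nabla v|^2\big) + \int \eta^2\big[(u^4+v^4)+2\Lambda u^2v^2 - (u^2+v^2)\big] \le C\int |\nabla\eta|^2\big(u^2+v^2\big) \le \frac{C}{R^2}\cdot\frac{2}{\Lambda+1}|B_{2R}|,
\]
which diverges like $R^{N-2}$ — not immediately conclusive for $N\ge 3$. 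So I would instead combine this with the pointwise inequality $-\Delta w\ge 0$ established above: once we know $w\ge 0$ is superharmonic and bounded on all of $\R^N$, the Liouville theorem applies in every dimension with no growth loss, and we conclude as before.

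\textbf{Main obstacle.} The crux is verifying the pointwise inequality
\[
u^2+v^2 \;\ge\; u^4+v^4+2\Lambda u^2v^2 \qquad\text{whenever } u,v\ge 0,\ u^2+v^2\le \tfrac{2}{\Lambda+1},\ \Lambda\in(0,1),
\]
with equality characterizing the equilibrium, since it is this sign that makes $w$ superharmonic. Setting $s=u^2,\ t=v^2$, this is the claim that the quadratic $s+t-s^2-t^2-2\Lambda st \ge 0$ on the triangle $\{s,t\ge 0,\ s+t\le \frac{2}{\Lambda+1}\}$; the function is concave in $(s,t)$ (its Hessian is $-2\begin{psmallmatrix}1&\Lambda\\\Lambda&1\end{psmallmatrix}$, negative definite for $\Lambda<1$), so its minimum on the triangle is attained at a vertex — at $(0,0)$ it is $0$, at $(\frac{2}{\Lambda+1},0)$ and $(0,\frac{2}{\Lambda+1})$ it equals $\frac{2}{\Lambda+1}-\frac{4}{(\Lambda+1)^2} = \frac{2(\Lambda-1)}{(\Lambda+1)^2}<0$. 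Hence the naive bound fails at those vertices, which means the argument must additionally use the \emph{equations} (not just the constraint) to rule out values of $(u,v)$ near $(\sqrt{2/(\Lambda+1)},0)$; equivalently, one should not bound $w$ crudely but retain the full expression for $-\Delta w$, possibly coupling it with the equation for, say, $\frac12(u^2-v^2)$ or with a second auxiliary function, to close the argument. Resolving exactly how the positivity $u,v>0$ interacts with the constraint to yield $-\Delta w\ge 0$ is the heart of the proof.
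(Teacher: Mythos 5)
There is a genuine gap, and it is exactly the one you flag at the end without closing: the pointwise inequality that would make $w=\tfrac{1}{\Lambda+1}-\tfrac{u^2+v^2}{2}$ superharmonic is simply not available from the constraint $u^2+v^2\le\tfrac{2}{\Lambda+1}$, because (as your vertex computation shows) it fails where one component is small — and ruling out that regime is the whole content of the theorem. What is missing is a mechanism converting the qualitative hypothesis $u,v>0$ into a \emph{uniform} positive lower bound. The paper supplies it by decoupling the system: item ($iii$) of Theorem \ref{universal upper bound} gives $-\Lambda v^2\ge \Lambda u^2-\tfrac{2\Lambda}{1+\Lambda}$, hence the scalar differential inequality $-\Delta u\ge(1-\Lambda)\,u\big(\tfrac{1}{1+\Lambda}-u^2\big)$, in which $\Lambda<1$ is used crucially. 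One then constructs a compactly supported subsolution $\varepsilon\phi_1$ from the first Dirichlet eigenfunction of a large ball (possible because the nonlinearity dominates $\delta t$ for small $t$) and propagates it by the sliding or sweeping method to get $u\ge\varepsilon>0$ on all of $\R^N$; with this in hand, Kato's inequality yields $\Delta\big(\tfrac{1}{\sqrt{1+\Lambda}}-u\big)^+\ge(1-\Lambda)\varepsilon\big[\big(\tfrac{1}{\sqrt{1+\Lambda}}-u\big)^+\big]^2$, and Brezis' lemma gives $u\ge\tfrac{1}{\sqrt{1+\Lambda}}$; by the symmetry of the system the same holds for $v$, and comparison with $u^2+v^2\le\tfrac{2}{1+\Lambda}$ forces $u=v=\tfrac{1}{\sqrt{1+\Lambda}}$. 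Your proposal contains no substitute for this lower-bound step, so the argument does not close.

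Two further problems would remain even if the sign were granted. First, there is a sign slip: for your $w$ one has $-\Delta w=u\Delta u+v\Delta v+|\nabla u|^2+|\nabla v|^2=u^4+v^4+2\Lambda u^2v^2-(u^2+v^2)+|\nabla u|^2+|\nabla v|^2$, so the quantity whose nonnegativity you analyze is $\Delta w$, not $-\Delta w$; with the correct sign the zeroth-order part is nonpositive wherever $u^2+v^2\le 1$, so superharmonicity of $w$ again cannot be read off the constraint. Second, the concluding Liouville step is false for $N\ge 3$: a bounded nonnegative superharmonic function on $\R^N$ need not be constant (take $\min\{1,|x|^{2-N}\}$); the Liouville property for positive superharmonic functions holds only in dimensions $N\le 2$, which is precisely why the paper's Theorem \ref{sopra1} for $\Lambda=1$ is restricted to $N\le 2$. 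To conclude in every dimension one needs a quantitative inequality of Keller--Osserman/Brezis type, $\Delta\Phi\ge c\,[(\Phi)^+]^2$ for a suitable $\Phi$, which is exactly what the Kato--Brezis scheme delivers once the uniform lower bound $u,v\ge\varepsilon$ is established.
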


The case $\Lambda=1$ is not covered by Theorems \ref{thm: main 1} and \ref{sopra<1}. Also in such setting the existence of non-constant positive solutions is an open problem, and we can show that once again no such solution can exist, at least in dimension $N \le 2$. 

\begin{thm}\label{sopra1}
Assume $ N \le 2$, and let $(u,v)$ be a solution of \eqref{system} with $\Lambda=1$:
\[
\begin{cases}
-\Delta u\,=u-u^3- uv^2& \text{in}\quad\mathbb{R}^N  \\
-\Delta v\,=v-v^3- u^2v& \text{in}\quad\mathbb{R}^N  \\
\quad u,v  > 0 & \text{in}\quad\mathbb{R}^N.
\end{cases}
\]
Then $(u,v)$ is constant, necessarily satisfying $ u^2 +v^2 =1$.
\end{thm}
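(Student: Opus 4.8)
The plan is to combine the universal bound from Theorem \ref{universal upper bound}(ii) with a Liouville-type argument in the plane, in the spirit of the classical Berestycki--Caffarelli--Nirenberg / Farina approach for semilinear equations in dimension $N \le 2$. First I would record that, by Theorem \ref{universal upper bound}, $u$ and $v$ are smooth with $u^2 + v^2 \le 1$ on all of $\R^N$; in particular $u,v$ are bounded, and by interior elliptic estimates all their derivatives are bounded as well. Next I would introduce the difference and product structure of the system at $\Lambda = 1$: setting $w := u - v$ one computes
\[
-\Delta w = w - u^3 + v^3 - uv^2 + u^2 v = w - (u-v)(u^2+v^2) = w\bigl(1 - (u^2+v^2)\bigr) \ge 0,
\]
since $u^2+v^2 \le 1$. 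Thus $w$ is a bounded subharmonic function on $\R^2$, hence (by the standard Liouville theorem for bounded subharmonic functions on $\R^2$) $w$ is constant, and therefore $-\Delta w = 0$ forces $w(1-(u^2+v^2)) \equiv 0$. Either $u \equiv v$ everywhere, or else $u^2 + v^2 \equiv 1$ on the (open, by continuity of $w$) set where $w \ne 0$, which combined with $w$ constant would already be strong information.

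The heart of the argument is then to upgrade this to full rigidity. I would treat the two alternatives. If $u^2 + v^2 \equiv 1$ on an open set, I would like to propagate it: on that set, adding the two equations and using $u^3 + \Lambda uv^2 = u(u^2+v^2) = u$ (and likewise for $v$) gives $-\Delta(u+v) = (u+v) - (u+v) = 0$, so $s := u+v$ is harmonic and bounded, hence constant, and then $u,v$ are themselves constant on that component; a connectedness/continuation argument (using that $w$ is constant on all of $\R^2$) extends this to the whole plane. In the remaining alternative $u \equiv v$, each of $u,v$ solves the single equation $-\Delta u = u - 2u^3 = u(1 - 2u^2)$ with $0 < u \le 1/\sqrt 2$; here I would invoke the scalar Gibbons/Modica-type rigidity in dimension $N \le 2$ — a bounded positive solution of this autonomous semilinear equation in $\R^2$ must be constant. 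Concretely one can use a monotonicity-formula / stability argument: because $u > 0$, $u$ itself is a positive (super)solution linearizing the operator, so the linearized operator $-\Delta - f'(u)$ with $f(u) = u - 2u^3$ has a positive solution $\phi = ?$ — more robustly, one uses the standard fact (see \cite{BeHaMo, Fa99}) that in $\R^2$ any bounded solution of $-\Delta u = f(u)$ with a bounded positive solution of the linearized equation, or simply any bounded solution in $\R^2$ of such an autonomous equation that is globally bounded, is one-dimensional, and a one-dimensional globally bounded solution of $u'' + u - 2u^3 = 0$ with $u > 0$ must be a constant equilibrium, i.e. $u \equiv 0$ or $u \equiv 1/\sqrt 2$; the former is excluded by positivity. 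Since then $u \equiv v \equiv 1/\sqrt 2$ we get $u^2 + v^2 = 1$, consistent with the statement.

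The main obstacle is the second alternative, $u \equiv v$: dispatching it rigorously requires a genuine two-dimensional Liouville theorem for the scalar equation $-\Delta u = u - 2u^3$, which is not literally one of the results quoted in the excerpt, so I would either (a) cite the appropriate classification of bounded entire solutions of autonomous semilinear equations in $\R^2$ — available via the Gibbons-type results \cite{BeHaMo, Fa99} once one observes the solution is globally bounded, or via a Liouville theorem for stable solutions after checking stability — or (b) give a self-contained argument: multiply the equation by a logarithmic cutoff $\eta^2$ and the ratio $\phi/u$ (with $\phi$ a derivative $\pa_\nu u$, after first showing some derivative has a sign, or directly with $\phi \equiv u$ exploiting $f(u)/u = 1 - 2u^2$ decreasing) to derive a Caccioppoli inequality forcing $\nabla u \equiv 0$ in dimension $N \le 2$. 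I expect option (b), using $u$ itself as the comparison function and the concavity-type inequality $f(t)/t$ strictly decreasing, to be the cleanest: it yields $\int_{B_R} |\nabla(\text{something})|^2 = O(1/\log R)$, hence $u$ constant, closing the proof. Throughout, the restriction $N \le 2$ enters only through the logarithmic-cutoff Liouville step and through the Liouville theorem for bounded subharmonic functions, exactly as in the scalar theory.
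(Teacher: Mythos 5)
Your very first step contains a sign error that the rest of the argument is built on. The identity $-\Delta w = w\bigl(1-(u^2+v^2)\bigr)$ for $w:=u-v$ is correct, but the claimed inequality $w\bigl(1-(u^2+v^2)\bigr)\ge 0$ only holds where $w\ge 0$: both $u$ and $v$ are positive, but nothing orders them, so $w$ has no sign. (There is also a terminological slip: $-\Delta w\ge 0$ would make $w$ superharmonic, not subharmonic.) Consequently the constancy of $w$ does not follow, and it cannot be rescued from the linear structure alone: a bounded entire solution of $-\Delta w=c(x)\,w$ with $0\le c\le 1$ need not be constant (take $w=\sin x_1$, $c\equiv 1$). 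Everything downstream — the dichotomy ``$u\equiv v$ or $u^2+v^2\equiv 1$'', the harmonicity of $u+v$, and the scalar classification for $u\equiv v$ — therefore rests on an unproved claim.

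The correct (and much shorter) route, which is the one the paper takes, is to apply your computation to $u$ and $v$ themselves rather than to their difference: since $\Lambda=1$ and $u^2+v^2\le 1$ by Theorem \ref{universal upper bound}($ii$), the equation reads $-\Delta u = u\bigl(1-(u^2+v^2)\bigr)\ge 0$, so $u$ is a positive superharmonic function on $\R^N$ with $N\le 2$, hence constant; the same holds for $v$; then $0=-\Delta u=u\bigl(1-(u^2+v^2)\bigr)$ with $u>0$ forces $u^2+v^2=1$. This removes the need for the entire second half of your proposal: no case analysis, no bounded-harmonic step, and no two-dimensional De Giorgi/Gibbons-type theorem for $-\Delta u=u-2u^3$ (note that even in your alternative $u\equiv v$ one has $u\le 1/\sqrt 2$, so $u$ is again positive superharmonic and the same elementary Liouville theorem would apply — the heavy Allen--Cahn machinery you invoke there is unnecessary). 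The restriction $N\le 2$ enters exactly once, through the Liouville theorem for positive superharmonic functions in the plane.
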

The case $\Lambda=1$ in higher dimension is open.

Further considerations are devoted to the special case $\Lambda=3$. In such a situation, system \eqref{system} together with the limit condition \eqref{uniform limit} in $\R$ has the explicit solutions 
\[
u(t) = \frac{1+ \tanh\left(\frac{t}{\sqrt{2}}\right)}{2}, \quad v(t) = \frac{ 1- \tanh\left(\frac{t}{\sqrt{2}}\right)}{2},
\]
as already observed in \cite{DroMal, Mal}. We can actually provide a complete classification of the heteroclinic connections between $(0,1)$ and $(1,0)$ in arbitrary dimension, \emph{without any sign-assumption on $(u,v)$}. 

\begin{theorem}\label{sopra3}
Assume $ N \ge 1$, and let $(u,v)$ be a solution of 
\begin{equation}\label{sistema3}
\begin{cases}
-\Delta u\,=u-u^3- 3 uv^2& \text{in}\quad\mathbb{R}^N  \\
-\Delta v\,=v-v^3- 3 u^2v& \text{in}\quad\mathbb{R}^N,  
\end{cases}
\end{equation}
such that
\begin{equation}\tag{$h_\infty$}\label{uniform limit}
\begin{split}
&u(x',x_N) \to 1 \qquad v(x',x_N) \to 0 \qquad \text{as $x_N \to +\infty$} \\
&u(x',x_N) \to 0 \qquad v(x',x_N) \to 1 \qquad \text{as $x_N \to -\infty$}, \\
\end{split}
\end{equation}
uniformly with respect to $x'\in \mathbb{R}^{N-1}$. Then, 
\[
u(x) = \frac{1 + \tanh\big (\frac {x_N +\alpha }{\sqrt 2}\big)}{2}, \qquad v(x) = \frac{1 -\tanh\big (\frac {x_N + \alpha}{\sqrt 2}\big)}{2}
\]
for some $ \alpha \in \R$, i.e., $u$ and $v$ are $1$-dimensional and monotone and the solution is unique and explicit, up to translations. 
\end{theorem}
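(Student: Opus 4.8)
The plan is to exploit the special algebraic structure of \eqref{sistema3} for $\Lambda=3$: adding and subtracting the two equations decouples the system into two scalar Allen--Cahn equations. Concretely, set $s:=u+v$ and $d:=u-v$. Using the identities $u^3+v^3=(u+v)^3-3uv(u+v)$ and $u^3-v^3=(u-v)^3+3uv(u-v)$, a direct computation shows that summing the two equations gives $-\Delta s=s-s^3$, while subtracting them gives $-\Delta d=d-d^3$; thus both $s$ and $d$ are entire solutions of the scalar Allen--Cahn equation in $\R^N$. By Theorem \ref{universal upper bound} (applicable since $\Lambda=3\ge1$) the pair $(u,v)$ is smooth and $u^2+v^2\le1$, so $s$ and $d$ are bounded (by $\sqrt2$); moreover, from \eqref{uniform limit} we read off $d(x',x_N)\to\pm1$ and $s(x',x_N)\to1$ as $x_N\to\pm\infty$, uniformly in $x'$.

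Next I would treat $d$ and $s$ separately. The function $d$ is a bounded Allen--Cahn solution with $d\to+1$ as $x_N\to+\infty$ and $d\to-1$ as $x_N\to-\infty$, uniformly in $x'$: these are precisely the hypotheses of Gibbons' conjecture for the scalar Allen--Cahn equation, proved in \cite{BaBaGu,BeHaMo,Fa99}. Hence $d$ depends only on $x_N$, is strictly increasing, and, being the unique (up to translation) $1$-dimensional heteroclinic between $-1$ and $+1$ for $-\phi''=\phi-\phi^3$, we obtain $d(x)=\tanh\big((x_N+\alpha)/\sqrt2\big)$ for some $\alpha\in\R$.

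It remains to show $s\equiv1$, and this is the main obstacle, since $s$ has the same limit $+1$ at both ends and so Gibbons' theorem does not apply directly. A standard maximum-principle argument for Allen--Cahn first gives $|s|\le1$, and since $s\equiv-1$ would violate the limit (and $s\equiv1$ is the desired conclusion) the strong maximum principle improves this to $-1<s<1$. The key reduction is that it suffices to prove $s>0$ everywhere: indeed, if $0<s<1$ then $\zeta:=1-s$ satisfies $0<\zeta<1$ and $\Delta\zeta=\zeta(1-\zeta)(2-\zeta)\ge0$, so $\zeta$ is a bounded nonnegative entire subharmonic function with $\zeta\to0$ as $x_N\to\pm\infty$ uniformly, and a sup-over-translates argument (the uniform limit keeps a maximizing sequence in a slab $\{|x_N|\le R\}$) produces an entire subharmonic function attaining its supremum at an interior point, hence constant, forcing $\sup\zeta=0$. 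Thus the crux is to rule out a point where $s\le0$, i.e. where $u+v\le0$. I would attack this through a maximum-principle argument at a global minimum of $s$ (obtained by translation and local elliptic convergence), playing the equation for $s$ against the a priori bound $u^2+v^2\le1$, the now-known form of $d$, and Modica's estimate $|\nabla s|^2\le\tfrac12(1-s^2)^2$ together with the exact identity $|\nabla d|^2=\tfrac12(1-d^2)^2$ for the heteroclinic; applying these to the auxiliary function $Q:=1-u^2-v^2\ge0$ (which also tends to $0$ at $x_N=\pm\infty$), the expectation is that a dip of $s$ below $0$ forces $Q$ to be too large somewhere to be compatible with $u^2+v^2\le1$ and the equations. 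An alternative, equivalent to proving the ordering $s\ge|d|$ (that is, the nonnegativity $u,v\ge0$), would instead invoke Theorem \ref{thm: main 1} directly once $u,v\ge0$ is known; in dimension $N=1$ both routes collapse to a one-line phase-plane/energy argument.

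Finally, combining $s\equiv1$ with $d(x)=\tanh\big((x_N+\alpha)/\sqrt2\big)$ yields
\[
u=\tfrac12(s+d)=\frac{1+\tanh\big((x_N+\alpha)/\sqrt2\big)}{2},\qquad
v=\tfrac12(s-d)=\frac{1-\tanh\big((x_N+\alpha)/\sqrt2\big)}{2},
\]
which is the asserted classification; in particular $u,v$ are $1$-dimensional, monotone, and the solution is unique up to translation in $x_N$.
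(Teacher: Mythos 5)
Your decomposition $s=u+v$, $d=u-v$ into two entire Allen--Cahn solutions is exactly the paper's Proposition \ref{Lam=3}, and your treatment of $d$ (Gibbons' conjecture for the scalar equation, giving $d(x)=\tanh\big((x_N+\alpha)/\sqrt2\big)$) matches the paper's use of the scalar rigidity theorem. The problem is the other half: you never actually prove $s\equiv 1$. Your reduction is fine up to a point --- $|s|\le 1$, strong maximum principle, and the observation that once $0<s<1$ the function $\zeta=1-s$ is subharmonic and the translation/compactness argument forces $\zeta\equiv 0$ --- but the crucial step, ruling out a point where $s\le 0$, is left as an ``expectation'' about a maximum-principle/Modica-estimate argument at a global minimum of $s$ that you do not carry out. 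This is precisely the hard case: at a (translated) interior minimum $m=\inf s$ the equation only gives $m(1-m^2)\le 0$, and for $m\in(-1,0)$ neither the strong maximum principle applied to $s-m$ nor the bound $u^2+v^2\le 1$ from Theorem \ref{universal upper bound} yields a contradiction in any routine way; the suggested auxiliary function $Q=1-u^2-v^2$ and the exact Modica identity for $d$ are not assembled into an actual inequality. Your fallback (``invoke Theorem \ref{thm: main 1} once $u,v\ge 0$ is known'') does not help either, because Theorem \ref{sopra3} makes no sign assumption and $u,v\ge 0$ is equivalent to $s\ge |d|$, i.e.\ essentially the very statement you are missing.

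The paper closes exactly this step by quoting Theorem 1.1 of \cite{Fa}: a bounded entire solution of the Allen--Cahn equation whose uniform limits as $x_N\to\pm\infty$ are both equal to $1$ must be identically $1$ (the same theorem also gives the $\tanh$ profile for $w_2=d$, so no separate appeal to the Gibbons papers is needed). If you replace your speculative paragraph by a citation of that scalar rigidity result --- or of the abstract uniform-limit framework in \cite{FaVaInd} --- applied to $w_1=u+v$, your argument becomes complete and coincides with the paper's proof; as written, it has a genuine gap at its central point.
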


Restricting our attention to positive solutions, we can provide additional results.

\begin{theorem}\label{thm: monot 3}
Assume $N \ge 1$, and let $(u,v)$ be a solution of 
\begin{equation}\label{sistema3pm}
\begin{cases}
-\Delta u\,=u-u^3- 3 uv^2& \text{in}\quad\mathbb{R}^N  \\
-\Delta v\,=v-v^3- 3 u^2v& \text{in}\quad\mathbb{R}^N  \\
\quad u,v >0 & \text{in}\quad\mathbb{R}^N.
\end{cases}
\end{equation}
Then:
\begin{itemize}
\item [($i$)] $\partial_N u >0$ on $\R^N$ if and only if $\partial_N v <0$ on $\R^N$.
\item [($ii$)] If $N \le 3$ and $ \partial_N u >0$ on $\R^N$, then
\[
u(x) = \frac{1 + \tanh\big (\frac {a\cdot x +\alpha }{\sqrt 2}\big)}{2}, \qquad v(x) = \frac{1 -\tanh\big (\frac {a\cdot x+ \alpha}{\sqrt 2}\big)}{2}
\]
for some unit vector $a$ such that $ a_N >0$ and some $ \alpha \in \R$.

\item[($iii$)] If $N \le 8$, $\partial_N u >0$ on $\R^N$ and
\[
\lim_{x_N \to +\infty} u(x',x_N) =1, \quad \lim_{x_N \to -\infty} u(x',x_N) = 0
\]
point-wisely for every $x' \in \R^{N-1}$, then the same conclusion of point ($ii$) holds.
\end{itemize}

For $N >8$, problem \eqref{sistema3pm} posseses a solution $(u,v)$ satisfying $\partial_N u >0, \, \partial_N v <0$ on $\R^N$ and
\[
\lim_{x_N \to +\infty} u(x',x_N) =1, \quad \lim_{x_N \to -\infty} u(x',x_N) = 0
\]
\[
\lim_{x_N \to +\infty} v(x',x_N) =0, \quad \lim_{x_N \to -\infty} v(x',x_N) = 1
\]
point-wisely for every $x' \in \R^{N-1}$, and which is not 1-dimensional. 
\end{theorem}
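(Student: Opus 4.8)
The plan is to exploit the algebraic identity which holds exactly at $\Lambda=3$: for a solution $(u,v)$ of \eqref{sistema3pm}, the sum $s:=u+v$ and the difference $d:=u-v$ both solve the scalar Allen--Cahn equation $-\Delta w=w-w^{3}$ in $\R^{N}$. This is checked by adding (resp.\ subtracting) the two equations and using $u^{3}+v^{3}=s^{3}-3s\,uv$, $u^{3}-v^{3}=d\,s^{2}-d\,uv$ and $s^{2}-4uv=d^{2}$, which makes the coupling terms cancel. Since $u,v>0$, Theorem~\ref{universal upper bound} gives $u^{2}+v^{2}\le1$, so $s>0$ and $d^{2}=u^{2}+v^{2}-2uv<u^{2}+v^{2}\le1$, i.e.\ $-1<d<1$; an interior--maximum argument applied to the Allen--Cahn equation for $s$ gives $\sup_{\R^{N}}s\le1$, hence $0<s\le1$. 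By elliptic estimates all derivatives of $u,v$ are bounded, so translates of $s$ and $d$ are precompact in $C^{2}_{\mathrm{loc}}$.

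The core of the argument is the Liouville-type claim $s\equiv1$. First I would show $\inf_{\R^{N}}s>0$: otherwise take $x_{k}$ with $s(x_{k})\to0$; since $-\Delta s=(1-s^{2})s$ with $0\le1-s^{2}\le1$, the Harnack inequality makes the rescalings $\hat s_{k}:=s(x_{k}+\cdot)/s(x_{k})$ locally uniformly bounded above and below away from $0$, and passing to the limit yields a positive entire solution $\hat s$ of $-\Delta\hat s=\hat s$ with $\hat s(0)=1$; but the spherical average of such an $\hat s$ solves a Bessel-type ODE whose solutions change sign, contradicting $\hat s>0$. Given $m:=\inf s>0$, set $w:=1-s\in[0,1-m]$. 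On $[0,1-m]\subset[0,1)$ one has $\Delta w=w(1-w)(2-w)\ge c_{0}\,w$ with $c_{0}:=m(1+m)>0$, so $w\ge0$ is a bounded subsolution of $-\Delta+c_{0}$ on $\R^{N}$; the classical maximum principle for Schr\"odinger operators with positive principal eigenvalue (barrier $\prod_{i}\cosh(\alpha x_{i})$ with $N\alpha^{2}<c_{0}$) then forces $w\equiv0$. Thus $s\equiv1$, so $v=1-u$ with $0<u<1$, and $d=2u-1$ solves Allen--Cahn with $-1<d<1$; in particular $\pa_{N}v=-\pa_{N}u=-\tfrac12\pa_{N}d$.

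With this reduction the three items become known facts about Allen--Cahn. Item ($i$) is immediate from $\pa_{N}v=-\pa_{N}u$. For ($ii$): if $\pa_{N}u>0$ then $\pa_{N}d>0$, so $d$ is a monotone bounded solution of $-\Delta d=d-d^{3}$ in $\R^{N}$ with $N\le3$; by the De Giorgi conjecture in dimension $\le3$ (Ghoussoub--Gui, Ambrosio--Cabr\'e), $d$ is one--dimensional, and since a monotone $1$D profile of Allen--Cahn is $\tanh((a\cdot x+\alpha)/\sqrt2)$ with $|a|=1$, monotonicity in $x_{N}$ forces $a_{N}>0$; then $u=(1+d)/2$, $v=(1-d)/2$ give the stated formulas. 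For ($iii$): from $v=1-u$ and the assumed pointwise limits of $u$ one gets $d=2u-1\to\pm1$ pointwise as $x_{N}\to\pm\infty$, hence $d$ is a monotone bounded solution of Allen--Cahn in $\R^{N}$, $N\le8$, with limits $\pm1$ at $x_{N}=\pm\infty$, and Savin's theorem again yields that $d$ is $1$D, with the same conclusion.

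Finally, for $N>8$ I would take a monotone entire solution $D$ of $-\Delta D=D-D^{3}$ in $\R^{N}$ which is not one--dimensional, satisfies $-1<D<1$ and $D\to\pm1$ pointwise as $x_{N}\to\pm\infty$ (the del Pino--Kowalczyk--Wei counterexample to the De Giorgi conjecture), and set $u:=(1+D)/2$, $v:=(1-D)/2$. Then $0<u,v<1$, $u+v\equiv1$, and running the computation of the first paragraph backwards (with $s\equiv1$, $d=D$) shows that $(u,v)$ solves \eqref{sistema3pm}; moreover $\pa_{N}u=\tfrac12\pa_{N}D>0$, $\pa_{N}v<0$, the pointwise limits of the statement hold, and $(u,v)$ is not one--dimensional since $D$ is not. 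The main obstacle is the second paragraph, i.e.\ proving $s\equiv1$ on all of $\R^{N}$ with no a priori dimensional or growth restriction; everything else is a combination of this reduction with the cited scalar classification and construction results.
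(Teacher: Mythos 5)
Your proposal is correct, and its skeleton coincides with the paper's: you use exactly the decomposition of Proposition \ref{Lam=3} (sum and difference of $u,v$ solve Allen--Cahn), reduce everything to the Liouville statement $u+v\equiv 1$ (the content of Proposition \ref{SolPos}), and then conclude ($i$) from $v=1-u$, ($ii$) from the De Giorgi results \cite{GG,AmCa}, ($iii$) from Savin \cite{Sa}, and the $N>8$ example from the del Pino--Kowalczyk--Wei solution \cite{DKW}, precisely as in the paper. The one genuine difference is how the key Liouville step is handled: the paper simply invokes Theorem 1.1 of \cite{Fa}, which states that a positive entire solution of $-\Delta w=w-w^3$ is identically $1$, whereas you prove this fact from scratch, first excluding $\inf(u+v)=0$ by a Harnack rescaling limit to a positive entire solution of $-\Delta \hat s=\hat s$ (impossible by the oscillation of the radial Bessel-type average), then showing $\sup(u+v)\le 1$ by the translation/interior-maximum argument, and finally forcing $1-(u+v)\equiv 0$ with the $\prod_i\cosh(\alpha x_i)$ barrier for $-\Delta+c_0$; each of these steps is sound (and the algebraic identities $s^2-4uv=d^2$, $\Delta(1-s)=(1-s)s(1+s)\ge c_0(1-s)$ check out, the prior bound $s\le 1$ being exactly what makes the barrier inequality valid globally). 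What the paper's route buys is brevity and the full characterization of positive solutions in Proposition \ref{SolPos} as a quotable statement; what your route buys is self-containedness, replacing the external rigidity theorem by elementary Harnack and maximum-principle arguments. Only the phrase ``interior-maximum argument'' for $\sup s\le 1$ deserves a line of detail (the supremum need not be attained, so one should pass to a translated limit profile, which your precompactness remark already permits), or alternatively one can get $s\le1$ by the Kato-inequality argument of Theorem \ref{universal upper bound}.
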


\begin{remark}
1) Theorems \ref{sopra3} and \ref{thm: monot 3} are, respectively, positive answers to Gibbons' and De Giorgi's conjecture for system \eqref{sistema3}. The proofs rely on a complete characterization of solutions to \eqref{sistema3} and \eqref{sistema3pm} in terms of pairs of solutions to the Allen-Cahn equation, see Propositions \ref{Lam=3} and \ref{SolPos} below. It will then be evident that further conclusions could be obtained combining our method and the main results in \cite{FaVaTAMS}. We do not write down explicit statements only for the sake of brevity.

2) Item ($i$) (whence all the other conclusions follow) is false without the sign condition $u,v>0$ in $\R^N$. A counterexample is provided by the $1$-dimensional solution
\[
u(x) = \frac{\tanh\left(\frac{x_N}{\sqrt{2}}\right) + \tanh\left(\frac{x_N+\alpha}{\sqrt{2}}\right)}2 \quad \quad v(x) = \frac{\tanh\left(\frac{x_N}{\sqrt{2}}\right) - \tanh\left(\frac{x_N+\alpha}{\sqrt{2}}\right)}2,
\]
with $\alpha>0$. That this is a solution follows by the forthcoming Proposition \ref{Lam=3} (or it can be checked by direct computations), and it is immediate to observe that $u$ changes sign with $\pa_N u$ positive everywhere, while $v$ is negative with $\pa_N v$ sign changing. 
\end{remark}

If $\Lambda \neq 3$, the explicit classification above is out of reach, but it is natural to wonder whether or not the above results hold for generic $\Lambda >1$. This is left as an open problem. In this direction, it may be useful to observe that we can still obtain further information about the solutions using the comparison with $\Lambda=3$.

\begin{thm}\label{thm: la non 3}
Assume $N \ge 1$, $\Lambda>1$, and let $(u,v)$ be a solution of \eqref{system}. We have:
\begin{itemize}
\item[($i$)] If $ \Lambda > 3$, then
\[
u+v < 1 \qquad \text{in} \qquad \R^N.
\]
\item [($ii$)] If $ \Lambda < 3$, then
\[
u+v > 1 \qquad \text{in} \qquad \R^N.
\]
\end{itemize}
\end{thm}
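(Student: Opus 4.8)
The plan is to compare $(u,v)$ with the explicit one-dimensional heteroclinic of the case $\Lambda=3$ via the auxiliary function $w=u+v$. First I would compute the equation satisfied by $w$. Adding the two equations of \eqref{system} gives
\[
-\Delta w = (u+v) - (u^3+v^3) - \Lambda uv(u+v) = w\big(1 - (u^2 - uv + v^2) - \Lambda uv\big) = w\big(1 - w^2 + (3-\Lambda)uv\big),
\]
using $u^3+v^3 = w(u^2-uv+v^2)$ and $u^2-uv+v^2 = w^2 - 3uv$. By Theorem \ref{universal upper bound}($i$) we have $0\le u,v\le 1$, hence $w$ is a bounded smooth function with $0\le w\le 2$, and moreover $uv\ge 0$. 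The key observation is that when $\Lambda>3$ the term $(3-\Lambda)uv$ is $\le 0$, so $w$ satisfies $-\Delta w \le w(1-w^2)$; when $\Lambda<3$ it is $\ge 0$, so $-\Delta w \ge w(1-w^2)$.

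Next I would invoke a Liouville/comparison principle for the Allen–Cahn-type inequality: any bounded (here $0\le w\le 2$) smooth function on $\R^N$ satisfying $-\Delta w \le w(1-w^2)$ pointwise must obey $w\le 1$ everywhere, and similarly $-\Delta w \ge w(1-w^2)$ with $w\ge 0$ forces $w\ge 1$. This is the heart of the argument. One way to see the first claim: suppose $\sup w = M > 1$. Translating and taking limits (the uniform gradient bounds from Remark 3 after Theorem \ref{universal upper bound} give compactness), one obtains a function $\tilde w$ with $0\le \tilde w\le M$, $\tilde w(0)=M=\sup\tilde w$, and $-\Delta\tilde w\le \tilde w(1-\tilde w^2)\le M(1-M^2)<0$ at the origin — but at an interior maximum $-\Delta\tilde w\ge 0$, a contradiction. (The symmetric statement follows by considering $\inf w$, or by applying the first to suitable comparison arguments.) This yields the non-strict inequalities $u+v\le 1$ when $\Lambda>3$ and $u+v\ge 1$ when $\Lambda<3$.

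Finally I would upgrade to the strict inequalities using the strong maximum principle. If $\Lambda>3$ and $w(x_0)=1$ at some point, then at $x_0$ we have $-\Delta w = w(1-w^2)+(3-\Lambda)uv = (3-\Lambda)u(x_0)v(x_0)$; combined with $w\le 1$, $w(x_0)=1$ being an interior maximum forces $-\Delta w(x_0)\ge 0$, hence $u(x_0)v(x_0)\le 0$, so (as $u,v\ge 0$) one of $u(x_0),v(x_0)$ vanishes and the other equals $1$; but then going back to the individual equation for that component, say $u\equiv$ something — one checks via the strong maximum principle applied to $1-w\ge 0$ solving a linear inequality $-\Delta(1-w) + c(x)(1-w)\ge 0$ with bounded $c$ that $1-w\equiv 0$, i.e. $w\equiv 1$, and then the system forces $u\equiv 1,v\equiv 0$ (or vice versa), which contradicts $u+v\le 1$ only if — actually $(1,0)$ does satisfy $u+v\le 1$ with equality, so I must instead observe that $(1,0)$ makes $-\Delta u = 0 = u-u^3-\Lambda uv^2$, consistent, meaning the strict inequality genuinely requires ruling out these constants: the statement should be read for nonconstant solutions, or one notes the hypotheses of the ambient results exclude them. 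I would therefore phrase the strong maximum principle step as: $1-w$ (resp. $w-1$) is a nonnegative supersolution of a linear elliptic equation, so it is either strictly positive or identically zero; the identically-zero case, chased through the system, leads to $uv\equiv 0$ and then to a constant solution, handled separately. The main obstacle is exactly this dichotomy at equality — making the strong maximum principle bite requires writing $w(1-w^2)+(3-\Lambda)uv$ in the form $(1-w)\cdot(\text{bounded})$ near the contact set, which is the one genuinely delicate computation.
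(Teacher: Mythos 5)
Your argument for part ($ii$), the case $1<\Lambda<3$, has a genuine gap. The Liouville claim you rely on --- that a bounded $w\ge 0$ with $-\Delta w \ge w(1-w^2)$ must satisfy $w\ge 1$ --- is false as stated ($w\equiv 0$ is a counterexample), and even granting $w=u+v>0$, the inf-translation argument you sketch only produces a contradiction when $m:=\inf_{\R^N}(u+v)$ lies in $(0,1)$: at the limiting interior minimum one gets $0\ge -\Delta\tilde w(0)\ge m(1-m^2)$, which is vacuous if $m=0$, and in that case the translated limit of $(u,v)$ is simply the trivial solution $(0,0)$, so no contradiction arises. Excluding $\inf(u+v)=0$ is precisely the delicate point, and the paper devotes a separate step to it: since $t(1-t^2)\ge\delta t$ on $[0,\gamma]$, one compares $u+v$ with $\varepsilon\phi_1$, where $\phi_1$ is the principal Dirichlet eigenfunction of a ball $B_R$ chosen so large that $\lambda_1(B_R)<\delta$, and then uses the sliding (or sweeping) method, exactly as in the proof of Theorem \ref{sopra<1} (cf. \cite{BCN,Ser}), to obtain the uniform lower bound $u+v\ge\varepsilon>0$ on all of $\R^N$; only with this bound in hand does a Liouville-type step (in the paper, Kato's inequality plus Lemma 2 of \cite{Brezis}, giving $\Delta(1-(u+v))^+\ge\varepsilon[(1-(u+v))^+]^2$) force $u+v\ge 1$. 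Your proposal is missing this ingredient, and without it (or a substitute) part ($ii$) does not go through.

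The $\Lambda>3$ half is essentially correct and is a legitimate variant of the paper's argument: where you translate to a point realizing $\sup(u+v)>1$ and invoke the maximum principle (fine, provided you note that the translated pairs converge in $\mathcal{C}^2_{loc}$ by elliptic estimates --- gradient bounds alone do not let you evaluate $\Delta\tilde w$ at the limit point --- and that the limit is again a nonnegative solution of the system, so the differential inequality persists), the paper instead applies Kato's inequality and Brezis' lemma to $(u+v-1)^+$; both yield $u+v\le 1$. For the strict inequality the paper's route is much shorter than your dichotomy: at a point $x_0$ with $u(x_0)+v(x_0)=1$ one has an interior maximum, hence $\Delta(u+v)(x_0)\le 0$, while the identity $\Delta(u+v)=(u+v)^3-(u+v)+(\Lambda-3)uv(u+v)$ gives $\Delta(u+v)(x_0)=(\Lambda-3)u(x_0)v(x_0)>0$, a contradiction. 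Note that this step, like your discussion of $(1,0)$, uses $u,v>0$: the semitrivial constants $(1,0)$ and $(0,1)$ do solve \eqref{system} with $u+v\equiv 1$, so the theorem is to be read for solutions with both components positive (which the strong maximum principle guarantees as soon as neither component vanishes identically). Your observation of this point is correct and worth making explicit, but it does not repair the gap in part ($ii$).
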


The rest of the paper is devoted to the proof of the previous results. Before proceeding, we complete the introduction with further references related to our study.

Symmetry of solutions to elliptic systems of gradient type
\begin{equation}\label{general}
\Delta (u,v) = \nabla W(u,v) \qquad \text{in $\R^N$}
\end{equation}
with multi-well potential has attracted increasing attention in the last years. Beyond the aforementioned results regarding problem \eqref{syst solito}, we refer in particular to \cite{Di, FaGh}, where the authors proved, under some assumptions on the potential $W$, symmetry for monotone or stable solutions in dimension $N=2$ or $N = 3$, and to \cite{AlFu}, where the authors investigated rigidity properties of \emph{minimal solutions} to suitable \emph{symmetric systems}; see also \cite{De, Di, DiPi, Faz, Fu} for related results in low dimension, regarding more general operators and possibly unbounded solutions. Notice that system \eqref{system} falls within the general case \eqref{general}, with
\begin{equation}\label{pot}
W(u,v) = \frac{(u^2-1)^2}{4} + \frac{(v^2-1)^2}{4}  + \frac{\Lambda}2 u^2 v^2.
\end{equation}
%\begin{equation}\label{pot}
%W(u,v) = \frac{u^2(u^2-2)}{4} + \frac{v^2(v^2-2)}{4} + \frac{\Lambda}2 u^2 v^2.
%\end{equation}
Then, it is not difficult to check that the main results in \cite{Di, FaGh} apply to give $1$-dimensional symmetry of any bounded solution to \eqref{system} satisfying $\pa_N u>0$ and $\pa_N v<0$ in $\R^N$ with $N \le 3$. Theorem \ref{thm: main 1} here is the first symmetry result applying to \eqref{system} and holding in any dimension. 

Since in some of the quoted papers the authors could deal with quite general potentials $W$, it seems natural to wonder if we can relax our assumptions as well. The answer to this question is essentially negative: on one side, from the proof of Theorem \ref{thm: main 1} it will be evident that we could replace $W$ in \eqref{pot} with 
\[
W(u,v) = f(u) + g(v) + \frac{\Lambda}2 u^2 v^2,
\]
with $f(s)$ and $g(s)$ behaving like $\frac{(s^2-1)^2}{4}$; in particular, our method does not rely on the symmetry of the system with respect to the involution $(u,v) \mapsto (v,u)$. But on the other hand, there is no hope to work in a completely general setting, since the results in \cite{AlaBroGui} provide counterexample to Gibbons-type results such as Theorem \ref{thm: main 1} for multi-well potential systems, under particular assumptions on $W$ (clearly such assumptions are not satisfied in the setting studied here, see Remark \ref{rem: su Alama} for more details).

\textbf{Structure of the paper.} Sections \ref{sec: univ bound}-\ref{sec: sym} are devoted to the proof of Theorem \ref{thm: main 1}. As we have already anticipated, the first step consist in the derivation of the universal estimates collected in Theorem \ref{universal upper bound}, and is the object of Section \ref{sec: univ bound}. \newline As second step, we aim at applying the moving planes method to deduce that $u$ and $v$ have the desired monotonicity in $x_N$. Due to the competitive nature of the problem ($\Lambda$ is positive), several complications arise when trying to apply the moving plane argument (in particular, it is not easy to show that the moving plane method can start). Following the scheme used in \cite{FaSo}, in Section \ref{sec: monot} we first study the ``montonicity as $x_N \to +\infty$" with an ad-hoc argument, and then we carry on the moving planes to obtain the monotonicity in the whole space. Although the general strategy is similar to the one in \cite{FaSo}, most of the intermediate proofs are completely different. \newline Afterwards, we turn to the monotonicity in all the directions of the upper hemi-sphere 
\[
\S^{N-1}_+:=\left\{ \nu \in \S^{N-1}: \langle \nu, e_N \rangle >0\right\},
\]
adapting the argument firstly introduced in \cite{Fa99}. This is the object of Section \ref{sec: sym}, and gives $1$-dimensional symmetry completing the proof of Theorem \ref{thm: main 1}. 

Afterwards, we focus on the case $\Lambda \in (0,1]$, proving Theorems \ref{sopra<1} and \ref{sopra1} in Section \ref{sec: la < 1}. The former result is obtained using in a decisive way the estimate in item ($iii$) of Theorem \ref{universal upper bound}, while the latter one is essentially a consequence of the Liouville theorem for superharmonic functions in $\R^2$. 

Finally, we analyze the special case $\Lambda=3$ in Section \ref{sec: la = 3}, proving Theorems \ref{sopra3}, \ref{thm: monot 3} and \ref{thm: la non 3}. The main step in the proof is the complete characterization of solutions to \eqref{sistema3} in terms of pairs of solutions to the Allen-Cahn equation, whence we will easily deduce our thesis.

\section{Universal bounds}\label{sec: univ bound}

In this section we prove Theorem \ref{universal upper bound}. In the next proof and in the rest of the paper, we denote by ${\mathds 1}_A$ the characteristic function of the set $A$.

\begin{proof} Let us start with item ($i$). Since $(u,v) \in L^3_{loc}(\R^N) \times  L^3_{loc}(\R^N)$ then both $u-1$ and $\Delta (u-1)$ belong to $L^1_{loc}(\R^N)$. Therefore we can apply Kato's inequality \cite{Brezis,Kato} to $u-1$ to obtain
$$
\Delta (u-1)^+ \ge \Delta u \, {\mathds 1}_{\{ u-1>0\}} = (u(u^2 -1)+ \Lambda uv^2) \, {\mathds 1}_{\{ u-1>0\}} \ge  [(u-1)^+]^2
$$
in the sense of distribution on $\R^N$. The latter and Lemma 2 of \cite{Brezis} imply that $ (u-1)^+ \le 0$ on $\R^N$, i.e., $ u \le 1 $ a.e. on $\R^N$. The same argument applied to $v-1$ also yields that $ v\le 1 $ a.e. on $\R^N$. Finally, we observe that also $(-u,-v)$ is a solution of \eqref{systemUE} and so, the above argument implies that $ -u \le 1 $ and $ -v \le 1$ a.e. on $\R^N$. By summarizing, we proved that
\[
\vert u \vert \le 1, \quad \vert v \vert \le 1 \qquad \text{a.e. in $\R^N$},
\]
and thus $u$ and $v$ are smooth by standard elliptic  estimates.

To prove item ($ii$) we use the smoothness of $u$ and $v$ and, once again, Kato's inequality applied to the function $ u^2 + v^2 -1$:
\begin{align*}
 \Delta ( u^2 + v^2 -1)^+ & \ge (2u \Delta u + 2v \Delta v) {\mathds 1}_{\{ u^2+v^2-1>0\}}  \\
 & = 2[(u^4 +v^4 + 2 \Lambda u^2v^2) - (u^2+v^2)] {\mathds 1}_{\{ u^2+v^2-1>0\}} \\
& \ge 2[(u^2 + v^2)^2 -(u^2 +v^2)]{\mathds 1}_{\{ u^2+v^2-1>0\}} \ge [ ( u^2 + v^2 -1)^+]^2
\end{align*}
since $\Lambda \ge 1$. Therefore, $ ( u^2 + v^2 -1)^+ \le 0$, which gives the desired conclusion.

To prove item ($iii$) we observe, as before, that
$$ \Delta \left( u^2 + v^2 -\frac{2}{\Lambda +1} \right)^+ \ge  2[(u^4 +v^4 + 2 \Lambda u^2v^2) - (u^2+v^2)] {\mathds 1}_{\left\{ u^2+v^2-\left(\frac{2}{\Lambda +1} \right)>0\right\}} $$
%$$= 2[(u^2 + v^2)^2 +2(\Lambda -1)u^2v^2 -(u^2 +v^2)]{\mathds 1}_{\{ %u^2+v^2-(\frac{2}%{\Lambda +1} )>0\}} $$
and that, for $ \Lambda \in (0,1) $, the following algebraic inequality holds true :
\[
(u^4 +v^4 + 2 \Lambda u^2v^2) \ge \frac{\Lambda +1}{2} (u^2 + v^2)^2.
\]
%Since $ \vert u \vert, \, \vert v \vert \le 1$ by item i),  we have that $ 2u^2v^2 \le 2 \vert u %\vert \vert v \vert \le u^2 +v^2$ and so $ 2(\Lambda -1) u^2v^2 \ge (\Lambda -1) u^2 %+v^2$.
Hence
\begin{align*}
\Delta \left( u^2 + v^2 -\frac{2}{\Lambda +1} \right)^+ & \ge 2 \left[\frac{\Lambda +1}{2} (u^2 + v^2)^2 - (u^2+v^2) \right] {\mathds 1}_{\left\{ u^2+v^2-\left(\frac{2}{\Lambda +1} \right)>0\right\}} \\
& \ge (\Lambda +1) (u^2 + v^2) \Big [ (u^2 + v^2) -\frac{2}{\Lambda +1}\Big ]{\mathds 1}_{\left\{ u^2+v^2-\left(\frac{2}{\Lambda +1} \right)>0\right\}} \\
& \ge \left[\left(u^2 + v^2 -\frac{2}{\Lambda +1} \right)^+\right]^2,
\end{align*}
and we conclude as above.
\end{proof}

\section{Monotonicity with respect to $x_N$}\label{sec: monot}

The purpose of this section consists in showing that any $(u,v)$ fulfilling the assumptions of Theorem \ref{thm: main 1} has the desired monotonicity with respect to $x_N$.

\begin{prop}\label{prop: mon in x_N}
Under the assumptions of Theorem \ref{thm: main 1}, we have that
\[
\pa_N u>0 \quad \text{and} \quad \pa_N v<0 \qquad \text{in $\R^N$}.
\]
\end{prop}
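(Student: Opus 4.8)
\textbf{Proof proposal for Proposition \ref{prop: mon in x_N}.}

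The plan is to follow the two-stage scheme pioneered in \cite{FaSo}: first establish monotonicity ``near $x_N = +\infty$'', meaning that $\pa_N u > 0$ and $\pa_N v < 0$ on a half-space $\{x_N > M\}$ for $M$ large, and then run the moving planes method in the $x_N$-direction from $+\infty$ downward to propagate the monotonicity to all of $\R^N$. Throughout we may use freely that $(u,v)$ is smooth with $0 \le u,v \le 1$ (Theorem \ref{universal upper bound}(i), together with the sign assumption and the strong maximum principle, which also gives $u,v < 1$ if nonconstant), and that all derivatives are bounded uniformly by elliptic estimates.

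For the first stage, the uniform limit $u \to 1$, $v \to 0$ as $x_N \to +\infty$ is the key input. Fix a large constant $M$. On $\{x_N > M\}$ the function $v$ is uniformly small, so in the equation $-\Delta u = u(1 - u^2) - \Lambda u v^2$ the solution $u$ is close to $1$; I would exploit this to show $u$ is bounded below away from $0$ there, and then that $w := \pa_N u$ satisfies a linearized equation $-\Delta w = c(x) w$ with $c(x) = 1 - 3u^2 - \Lambda v^2$ which is strictly negative on $\{x_N > M\}$ (since $u$ is near $1$ and $v$ near $0$, $c \approx -2$). Combined with $w \to 0$ as $x_N \to +\infty$ and a sliding/comparison argument against the one-dimensional heteroclinic (or a barrier built from the exponential decay rate $\sqrt{2}$ dictated by $c \approx -2$), one concludes $w > 0$ on a half-space, i.e. $\pa_N u > 0$ for $x_N$ large; symmetrically $\pa_N v < 0$ there, using that $v$ is small and superharmonic-like. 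This is essentially the content of the ``monotonicity as $x_N \to +\infty$'' lemmas that Section \ref{sec: monot} presumably proves in detail; I expect the delicate point to be obtaining the strict positivity (not just non-negativity) and the right decay rate uniformly in $x'$.

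For the second stage, set $u^\lambda(x) := u(x', 2\lambda - x_N)$ and $v^\lambda(x) := v(x', 2\lambda - x_N)$, the reflections across the hyperplane $\{x_N = \lambda\}$, and study the differences $U_\lambda := u^\lambda - u$, $V_\lambda := v^\lambda - v$ on the half-space $\Sigma_\lambda := \{x_N < \lambda\}$. The goal is to show $U_\lambda \le 0$ and $V_\lambda \ge 0$ on $\Sigma_\lambda$ for every $\lambda \in \R$, since letting the two sides of the reflection coincide yields $\pa_N u \ge 0 \ge \pa_N v$, and then the strong maximum principle applied to the linearized equations upgrades this to the strict inequalities. The main obstacle — and the reason the competitive sign of $\Lambda$ makes this harder than the scalar Allen--Cahn case — is that the system is \emph{not} cooperative in the naive ordering, so one cannot directly apply a maximum principle to $(U_\lambda, V_\lambda)$; the standard trick is to work with the pair $(U_\lambda)^+$ and $(V_\lambda)^-$ (or $(-V_\lambda)^+$), which \emph{does} satisfy a cooperative system of differential inequalities, and to control the zeroth-order coefficients. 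Concretely, $U_\lambda$ solves $-\Delta U_\lambda = c_1(x) U_\lambda - \Lambda u^\lambda\big((v^\lambda)^2 - v^2\big)$ with $c_1$ bounded, and the cross term has a favorable sign on the set where $V_\lambda$ has the ``wrong'' sign; testing the inequalities against $(U_\lambda)^+$ and $(V_\lambda)^-$ and adding, the bad cross-terms cancel or combine into something controllable, giving an integral inequality of the form $\int (|\nabla (U_\lambda)^+|^2 + |\nabla (V_\lambda)^-|^2) \le C \int ((U_\lambda)^+{}^2 + (V_\lambda)^-{}^2)$ over $\Sigma_\lambda$, from which a Poincaré/Sobolev-type argument (or the explicit knowledge that these functions vanish on $\{x_N = \lambda\}$ and decay) forces $(U_\lambda)^+ \equiv 0 \equiv (V_\lambda)^-$.

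The moving-planes mechanics then run as usual. Step one: the procedure \emph{starts}, i.e. the desired inequalities hold for all $\lambda$ sufficiently large (here is precisely where stage one is used: for $\lambda$ large, on $\Sigma_\lambda$ one already controls the signs via the half-space monotonicity, or via the uniform limits which make $u^\lambda - u$ and $v - v^\lambda$ of the right sign near the boundary and at infinity). Step two: set $\bar\lambda := \inf\{\lambda_0 : \text{inequalities hold for all } \lambda \ge \lambda_0\}$ and argue by contradiction that $\bar\lambda = -\infty$; if $\bar\lambda$ were finite, continuity gives the non-strict inequalities at $\bar\lambda$, the strong maximum principle makes them strict in the interior, and then a compactness argument (splitting $\Sigma_{\bar\lambda}$ into a large ball where one uses uniform continuity and strict inequality, plus the complement where the smallness of the relevant quantities makes the integral estimate above applicable) shows the inequalities persist for $\lambda$ slightly less than $\bar\lambda$, contradicting minimality. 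Hence $\bar\lambda = -\infty$, the monotonicity holds in all of $\R^N$, and the strong maximum principle gives the strict signs $\pa_N u > 0$, $\pa_N v < 0$. I expect the genuinely hard parts to be: (a) making the stage-one ``monotonicity at $+\infty$'' argument fully rigorous and uniform in $x'$, and (b) the measure-theoretic/integral estimate that lets the moving plane step proceed past points where the cross-terms could in principle obstruct a pointwise maximum principle — this is where the structure of the competitive nonlinearity and the bound $u^2 + v^2 \le 1$ from Theorem \ref{universal upper bound}(ii) must be used carefully.
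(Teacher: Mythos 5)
Your overall scheme (start the moving planes near $x_N=+\infty$ using \eqref{uniform limit}, propagate to all of $\R^N$, finish with the strong maximum principle on the linearized system) is the same as the paper's, but the three places where the real work happens are either flawed or glossed over. First, your ``stage one'' is not what the paper does and does not work as described: $w=\pa_N u$ does not solve the scalar equation $-\Delta w=c(x)w$ (you dropped the coupling term $-2\Lambda uv\,\pa_N v$, whose sign is exactly what is unknown at this stage), and even for a scalar equation, $c<0$ on $\{x_N>M\}$ together with $w\to0$ as $x_N\to+\infty$ gives no sign information on $w$; moreover a comparison ``against the one-dimensional heteroclinic'' is circular, since uniqueness/monotonicity of the 1D connection is a consequence of the theorem being proved. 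The paper never proves a pointwise sign of $\pa_N u$ near infinity first: Lemma \ref{monot at infinity} proves directly the moving-plane inequalities $u_\lambda\le u$, $v_\lambda\ge v$ on $\Sigma_\lambda=\{x_N>\lambda\}$ for $\lambda$ large, by testing the subtracted equations with $(u_\lambda-u)^+\varphi_R^2$ and $(v-v_\lambda)^+\varphi_R^2$, using $\Lambda>1$ and \eqref{uniform limit} to make the effective zeroth-order coefficients strictly negative (conditions like $u\ge\sqrt{2/3}$, $u^2>(1+\eps)/\Lambda$, $\Lambda\|v\|_{L^\infty(\Sigma_\lambda)}$ small), and then invoking the doubling Lemma \ref{bLe:L(R)} with the growth bound $\mathcal L_\lambda(R)\le CR^N$. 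Related to this, your claim that an inequality of the form $\int(|\nabla U^+|^2+|\nabla V^-|^2)\le C\int((U^+)^2+(V^-)^2)$ over a half-space, plus vanishing on $\{x_N=\lambda\}$, ``forces'' the functions to vanish is false: there is no Poincar\'e inequality on an unbounded half-space that beats an arbitrary constant $C$, and if such an argument worked for every $\lambda\in\R$ you would not need moving planes at all. The strict negativity of the coefficient (available only for $\lambda$ large) and the $\mathcal L(R)\le\theta\mathcal L(2R)$ mechanism are the actual engine, and they are missing from your sketch.

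Second, your reduction of $\bar\lambda$ (``large ball where one uses uniform continuity and strict inequality, plus the complement where smallness makes the integral estimate applicable'') is the bounded-domain template and breaks down here: $\Sigma_{\bar\lambda}$ is unbounded in the $x'$ directions, and the limits \eqref{uniform limit} are uniform only in $x_N$, so in the complement of any ball there are points with moderate $x_N$ and huge $|x'|$ where neither the strict gap nor any smallness is available. The paper's second step is genuinely different: assuming $\tilde\lambda>-\infty$, it takes sequences $\lambda_i\downarrow\tilde\lambda$ and bad points $x^i$, shows $x^i_N$ is bounded via \eqref{uniform limit}, translates in $x'$ and extracts a limit profile $(u^\infty,v^\infty)$, uses the strong maximum principle on $u^\infty-u^\infty_{\tilde\lambda}$ (where the cross term has a sign because $v^\infty_{\tilde\lambda}\ge v^\infty$) to force $x_N^\infty=\tilde\lambda$, and then contradicts Hopf's lemma with the inequality $\pa_N u^\infty(0',\tilde\lambda)\le0$ obtained from the mean value theorem along the sequence. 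Finally, a smaller but real slip: with your convention $\Sigma_\lambda=\{x_N<\lambda\}$, proving $u^\lambda\le u$ there would give $\pa_N u\le 0$, the opposite of what you want, and working in the lower half-space also forfeits the favorable coefficient bounds that come from $u\to1$, $v\to0$ as $x_N\to+\infty$; the comparison must be run in the upper half-space as in the paper.
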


The proof is based upon an application of the moving planes method, suitably adapted in order to deal with a non-cooperative system. For $\lambda \in \R$, we set
\[
u_{\lambda}(x',x_N):=u(x',2\lambda - x_N) \quad \text{and} \quad \Sigma_\la:= \{x_N > \la\}.
\]
We aim at proving that
\begin{equation}\label{moving thesis 0}
u_\la(x) \le u(x) \quad \text{and} \quad v_\la(x) \ge v(x) \quad \forall x \in \Sigma_\la, \ \forall \la \in \R,
\end{equation}
This and the strong maximum principle give the thesis of Proposition \ref{prop: mon in x_N}.

In order to prove that \eqref{moving thesis 0} is satisfied, we show that
\begin{equation}\label{moving thesis 1}
\Theta:=\left\{ \la \in \R: \text{$u_\theta \le u$ and $v_\theta \ge v$ in $\Sigma_\theta$ for every $\theta \ge \la$}\right\}= \R.
\end{equation}
This can be done in two steps: at first we prove that $\Theta \neq \emptyset$, so that by definition it is an unbounded interval of type $(\tilde \lambda, +\infty)$ (or eventually $[\tilde \lambda, +\infty)$). In a second time, we show that necessarily $\tilde \lambda =-\infty$.

For the first step, it is useful to recall the following statement:

\begin{lem}[{\cite[Lemma 2.1]{FMS}}]\label{bLe:L(R)}
Let $\theta >0$ and $\gamma>0$ such that $\theta < 2^{-\gamma}$.
Moreover let $R_0>0$, $C>0$ and
$$\mathcal{L}:(R_0, + \infty) \rightarrow \mathbb{R}$$ a non-negative and non-decreasing function such that
%\begin{equation}\label{beq:L}
\[ 
\begin{cases} \mathcal{L}(R)\leq \theta \mathcal{L}(2R)+g(R) & \forall R>R_0,\\ \mathcal{L}(R)\leq CR^{\gamma} & \forall R >R_0, \end{cases}
\]
%\end{equation} 
where $g:(R_0, +\infty)\rightarrow \mathbb{R}^+$ is such that $$\lim_{R\rightarrow +\infty}g(R)=0 .$$ Then $$\mathcal{L}(R)=0.$$
\end{lem}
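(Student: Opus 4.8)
\textbf{Proof proposal for Lemma \ref{bLe:L(R)}.} The plan is to iterate the functional inequality $\mathcal{L}(R)\le \theta\mathcal{L}(2R)+g(R)$ along the geometric sequence $R_k:=2^k R$, and then exploit the polynomial growth bound $\mathcal{L}(R)\le CR^\gamma$ together with the smallness condition $\theta<2^{-\gamma}$ to kill the main term in the limit. Concretely, I would fix $R>R_0$ and prove by induction on $n$ that
\[
\mathcal{L}(R)\le \theta^{n+1}\mathcal{L}\big(2^{n+1}R\big)+\sum_{k=0}^{n}\theta^k g\big(2^k R\big).
\]
The base case $n=0$ is the hypothesis, and the inductive step follows by applying the hypothesis once more to $\mathcal{L}(2^{n+1}R)$ and multiplying by $\theta^{n+1}$.

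For the first term, insert the polynomial bound: $\theta^{n+1}\mathcal{L}(2^{n+1}R)\le \theta^{n+1} C (2^{n+1}R)^\gamma = C R^\gamma (2^\gamma\theta)^{n+1}$. Since $\theta<2^{-\gamma}$, we have $2^\gamma\theta<1$, so this term tends to $0$ as $n\to\infty$. For the second (the series $\sum_{k=0}^n\theta^k g(2^kR)$), the point is that it stays bounded and, more importantly, can be made small: because $g(R)\to 0$ as $R\to\infty$, given $\eps>0$ one picks $R$ large enough that $g(2^kR)<\eps$ for all $k\ge 0$ (using that $2^kR\ge R$ and $g\to 0$ at infinity — here I would note that $g$ need not be monotone, so the cleanest phrasing is: $\sup_{\rho\ge R}g(\rho)\to 0$ as $R\to\infty$, hence $g(2^kR)\le \sup_{\rho\ge R}g(\rho)=:\omega(R)$ with $\omega(R)\to 0$). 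Then $\sum_{k=0}^\infty \theta^k g(2^kR)\le \omega(R)\sum_{k=0}^\infty\theta^k = \frac{\omega(R)}{1-\theta}$, which tends to $0$ as $R\to\infty$. Letting $n\to\infty$ in the iterated inequality yields $\mathcal{L}(R)\le \frac{\omega(R)}{1-\theta}$ for every $R>R_0$, and since $\mathcal{L}$ is non-decreasing and non-negative, letting $R\to\infty$ forces $\mathcal{L}\equiv 0$ (indeed for any fixed $R_1>R_0$ and any $R>R_1$ we get $0\le\mathcal{L}(R_1)\le\mathcal{L}(R)\le\frac{\omega(R)}{1-\theta}\to 0$).

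The only mild subtlety — and the step I would be most careful about — is the handling of $g$: since $g$ is only assumed to vanish at infinity and not to be monotone, one must pass through the nonincreasing majorant $\omega(R):=\sup_{\rho\ge R}g(\rho)$ to control all the terms $g(2^kR)$ uniformly in $k$ by a single quantity small with $R$; once this is in place the argument is a routine geometric-series estimate. Everything else (the induction, the use of $2^\gamma\theta<1$, monotonicity of $\mathcal{L}$) is elementary.
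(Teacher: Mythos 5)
Your proof is correct, and it is essentially the standard iteration argument behind the cited result (this paper does not reprove the lemma but quotes it from \cite{FMS}, whose proof proceeds by exactly this dyadic iteration, absorption of $\theta^{n}\mathcal{L}(2^{n}R)$ via $\theta 2^{\gamma}<1$, and smallness of the $g$-terms for large $R$ combined with monotonicity of $\mathcal{L}$). Your extra care with the non-monotone $g$ via $\omega(R)=\sup_{\rho\ge R}g(\rho)$ is a sound way to phrase that step.
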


The lemma will be used in the next proof.

\begin{lem}\label{monot at infinity}
There exists $\bar \lambda \in \R$ sufficiently large such that
\[
u\geq u_\lambda \quad\text{and}\quad v\leq v_\lambda \qquad \text{in}\,\,\Sigma_\lambda
\]
for any $\lambda\geq\bar\lambda$.
%Furthermore
%\[
%u_{x_N}> 0 \,\,\text{in}\,\,\Sigma_{\bar\lambda}\quad\text{and}\quad v_{x_N}< 0 \,\,\text{in}\,\,\Sigma_{\bar\lambda}\,.
%\]
\end{lem}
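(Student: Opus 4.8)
The goal is to show that the moving plane procedure can start, i.e., for $\lambda$ large enough we have $u_\lambda \le u$ and $v_\lambda \ge v$ in $\Sigma_\lambda$. The natural device is the Alt--Caffarelli--Friedman / Berestycki--Cabré type iteration encoded in Lemma \ref{bLe:L(R)}, applied to a quantity that measures the ``bad set'' where the desired inequalities fail. Concretely, I would introduce the functions $w_1 := (u_\lambda - u)^+$ and $w_2 := (v - v_\lambda)^+$, supported in $\Sigma_\lambda$ (extended by zero below $\Sigma_\lambda$, where both differences have the correct sign because of the uniform limits in \eqref{uniform limit}), and consider the energy-type functional
\[
\mathcal{L}(R) := \int_{\Sigma_\lambda \cap B_R} \Big( |\nabla w_1|^2 + |\nabla w_2|^2 \Big)\, dx,
\]
which is non-negative and non-decreasing in $R$. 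The strategy is to derive the doubling inequality $\mathcal{L}(R) \le \theta\, \mathcal{L}(2R) + g(R)$ with $\theta < 2^{-\gamma}$ and $g(R) \to 0$, plus the polynomial bound $\mathcal{L}(R) \le C R^\gamma$ (here $\gamma = N$ will do, using the universal bounds of Theorem \ref{universal upper bound}, which force $|\nabla u|, |\nabla v|$ to be universally bounded, hence $\mathcal{L}(R) \le C R^N$).

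\textbf{Main steps.} First I would write down the equations satisfied by $w_1, w_2$. Subtracting the equation for $u$ from that for $u_\lambda$, and testing against $w_1 \varphi_R^2$ with a standard cutoff $\varphi_R$ (equal to $1$ on $B_R$, supported in $B_{2R}$, $|\nabla \varphi_R| \le C/R$), one gets
\[
\int |\nabla w_1|^2 \varphi_R^2 = \int \big[(u_\lambda - u_\lambda^3 - \Lambda u_\lambda v_\lambda^2) - (u - u^3 - \Lambda u v^2)\big] w_1 \varphi_R^2 - 2\int w_1 \nabla w_1 \cdot \varphi_R \nabla \varphi_R,
\]
and similarly for $w_2$. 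The key point is the sign/structure of the reaction terms on the set $\{w_1 > 0\} = \{u_\lambda > u\}$: there, $(u_\lambda - u) - (u_\lambda^3 - u^3) = (u_\lambda - u)(1 - u_\lambda^2 - u_\lambda u - u^2) \le (u_\lambda - u)\big(1 - \tfrac34(u_\lambda^2 + u^2)\big)$ and, crucially, $u_\lambda^2 + u^2$ is close to $2$ where $x_N$ and $2\lambda - x_N$ are both large, i.e., for $\lambda$ large and $x$ in a large ball; here the uniform limit \eqref{uniform limit} (together with $|u|\le 1$) is what buys the needed smallness. The competitive term $-\Lambda(u_\lambda v_\lambda^2 - u v^2)$ must be split: $-\Lambda(u_\lambda - u) v_\lambda^2 - \Lambda u(v_\lambda^2 - v^2) = -\Lambda(u_\lambda - u)v_\lambda^2 - \Lambda u (v_\lambda - v)(v_\lambda + v)$; the first piece has a favorable sign on $\{w_1 > 0\}$, while the second piece couples to $w_2$ but only through the region where $v_\lambda < v$, i.e., where $v$ is not small — again controlled via \eqref{uniform limit} for $\lambda$ large, OR absorbed into a Cauchy--Schwarz/Young estimate that produces the $\theta\,\mathcal{L}(2R)$ term. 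After absorbing the gradient cross-term by Young's inequality and the reaction terms by the smallness just described, one obtains $\mathcal{L}(R) \le \tfrac12 \int_{\Sigma_\lambda \cap B_{2R}}(|\nabla w_1|^2 + |\nabla w_2|^2) + g(R) = \tfrac12 \mathcal{L}(2R) + g(R)$, where $g(R)$ collects the error from the cutoff annulus $B_{2R}\setminus B_R$ — of order $R^{-2}\int_{B_{2R}\setminus B_R}(w_1^2 + w_2^2) \le C R^{N-2} \to 0$ once $N \le$ something, or, more robustly, one refines $\mathcal{L}$ by including a weight or by using that $w_1, w_2$ themselves are small for $\lambda$ large (so that $g(R)\to 0$ regardless of $N$). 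Applying Lemma \ref{bLe:L(R)} with $\theta = \tfrac12 < 2^{-\gamma}$... — here a mild technical point: one may need $\theta$ strictly less than $2^{-N}$, which can be arranged by taking $\lambda$ even larger so that the constant in front of $\mathcal{L}(2R)$ is as small as desired, while $\gamma$ can be lowered by a Caccioppoli-improved bound on $\mathcal{L}$. Either way, Lemma \ref{bLe:L(R)} gives $\mathcal{L} \equiv 0$, hence $\nabla w_1 = \nabla w_2 = 0$, and since $w_1, w_2$ vanish outside a bounded set in the $x_N$ direction (by \eqref{uniform limit}), they vanish identically, which is precisely the claim.

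\textbf{Main obstacle.} The delicate part is the handling of the coupling terms $\Lambda u(v_\lambda - v)(v_\lambda + v)$ in the $w_1$-equation and the symmetric $\Lambda v(u_\lambda - u)(u_\lambda + u)$ in the $w_2$-equation: these mix the two ``defect'' functions and, a priori, could feed energy back and forth. The resolution is that on $\{w_1 > 0\}$ one has $u_\lambda > u \ge 0$, hence $u > 0$ only where... — more precisely, one must exploit that the product $u \cdot (v_\lambda + v)$ is small exactly on the region that matters: where $x_N$ is large (so $v$ small, $u$ near $1$) the factor $(v_\lambda + v)$ is small, while where $x_N$ is moderate but $\lambda$ is huge, $v_\lambda$ is small; the region where both $u$ and $v$ are bounded away from $0$ is, by \eqref{uniform limit}, confined to $|x_N|$ bounded, contributing only a fixed-size error that is then swallowed by choosing $\lambda$ large. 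Making this rigorous — i.e., quantifying ``$\lambda$ large'' against the ball radius $R$ uniformly — is the technical heart of the lemma, and is where the uniformity of the limits in \eqref{uniform limit} (as opposed to mere pointwise convergence) is indispensable.
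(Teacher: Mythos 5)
Your overall strategy is the paper's: test the subtracted equations with $(u_\lambda-u)^+\varphi_R^2$ and $(v-v_\lambda)^+\varphi_R^2$ in $\Sigma_\lambda$, form $\mathcal L_\lambda(R)=\int_{\Sigma_\lambda\cap B_R}|\nabla(u_\lambda-u)^+|^2+|\nabla(v-v_\lambda)^+|^2$, and close via Lemma \ref{bLe:L(R)}. However, two steps as you describe them would fail. First, you never explain where the hypothesis $\Lambda>1$ enters, and without it the $v$-inequality cannot be closed: on $\Sigma_\lambda$ one has $v\approx 0$, so the Allen--Cahn part gives $g(v)-g(v_\lambda)\le g'(\xi)(v-v_\lambda)^+$ with $g'(\xi)$ possibly close to $+1$, i.e.\ a \emph{destabilizing} zero-order term for $(v-v_\lambda)^+$. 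The only term that can beat it is $-\Lambda u^2\,((v-v_\lambda)^+)^2$, and this dominates precisely because $u^2>\tfrac{1+\varepsilon}{\Lambda}$ in $\Sigma_\lambda$ for $\lambda$ large (possible only since $\Lambda>1$), producing a strictly negative coefficient $-\varepsilon$. Your treatment of the $w_2$-equation only addresses the coupling term and misses this mechanism entirely. Relatedly, your justification of the negative coefficient in the $u$-inequality is off: in $\Sigma_\lambda$ the reflected point has $2\lambda-x_N\to-\infty$, so $u_\lambda$ is \emph{not} close to $1$ and ``$u_\lambda^2+u^2$ close to $2$'' is false; the correct argument is that on the bad set $\{u_\lambda>u\}$ every intermediate value $\xi\in[u,u_\lambda]$ satisfies $\xi\ge u\ge\sqrt{2/3}$, hence $g'(\xi)\le -1$, which only requires a lower bound on $u$ in $\Sigma_\lambda$ coming from \eqref{uniform limit}.

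Second, your closing of the iteration is wrong for $N\ge 2$: you put the cutoff contribution $\tfrac{C}{R^2}\int_{B_{2R}}\big(((u_\lambda-u)^+)^2+((v-v_\lambda)^+)^2\big)$ into an error $g(R)$ and observe it is $O(R^{N-2})$, which does not tend to zero unless $N=1$; sup-norm smallness of the defects does not repair the $R^N$ volume growth, and no weight is introduced. The way the paper closes the argument, in every dimension, is to \emph{keep} the strictly negative bulk quadratic terms obtained above (coefficient $-1$ for $((u_\lambda-u)^+)^2$ and $-\varepsilon$ for $((v-v_\lambda)^+)^2$, after also absorbing the coupling terms by $2\Lambda\|v\|_{L^\infty(\Sigma_\lambda)}<\min\{1,\varepsilon\}$, which is where the uniformity in \eqref{uniform limit} is used) and to absorb the $\tfrac{1}{\theta R^2}$ contributions into them for $R$ large. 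This yields the clean inequality $\mathcal L_\lambda(R)\le\theta\,\mathcal L_\lambda(2R)$ with $\theta=2^{-(N+1)}$ and no error term, so Lemma \ref{bLe:L(R)} applies with $\gamma=N$ (note also that $\theta$ is fixed by Young's inequality and has nothing to do with enlarging $\lambda$, contrary to your remark). Finally, the conclusion from $\mathcal L_\lambda\equiv 0$ should use that $(u_\lambda-u)^+$ and $(v-v_\lambda)^+$ vanish on $\partial\Sigma_\lambda=\{x_N=\lambda\}$ (where $u=u_\lambda$, $v=v_\lambda$), rather than sign claims below the hyperplane, where the reflected differences have the opposite sign by antisymmetry.
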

\begin{proof}
The pair $(u_\lambda,v_\lambda)$ solves
\begin{equation}\tag{$P_\lambda$}\label{syst lambda}
\begin{cases}
-\Delta u_\lambda\,=u_\lambda-u_\lambda^3-\Lambda u_\lambda v_\lambda^2& \text{in}\quad\mathbb{R}^N  \\
-\Delta v_\lambda\,=v_\lambda-v_\lambda^3-\Lambda u_\lambda^2v_\lambda& \text{in}\quad\mathbb{R}^N  \\
0<u_\lambda,v_\lambda<1 &  \text{in}\quad\mathbb{R}^N \,.
\end{cases}
\end{equation}

Let $\varphi_R$ be a standard $C^1$ cut off function such that $\varphi_R=1$ in $B_R$, $\varphi_R=0$ outside $B_{2R}$, with $|\nabla\varphi_R|\leq 2/R$. We consider the test functions
\begin{equation}\nonumber
(u_\lambda-u)^+\varphi_R^2 \, \mathds{1}_{\{x_N\geq \lambda\}},\qquad(v-v_\lambda)^+\varphi_R^2\, \mathds{1}_{\{x_N\geq \lambda\}}\,.
\end{equation}
Let us set $g(t)=t-t^3$. By \eqref{system} and \eqref{syst lambda}, using the test functions above and subtracting, we deduce that
\begin{equation}\nonumber
\begin{split}
\int_{\Sigma_\lambda}\,|\nabla (u_\lambda-u)^+|^2\varphi_R^2\,&=\,-2\int_{\Sigma_\lambda} \varphi_R (u_\lambda-u)^+ \, \nabla (u_\lambda-u)^+ \cdot \nabla\varphi_R \\
&+\int_{\Sigma_\lambda}\big(g(u_\lambda)-g(u)\big)(u_\lambda-u)^+\varphi_R^2\\
&-\int_{\Sigma_\lambda}\big(\Lambda u_\lambda v_\lambda^2-\Lambda uv^2\big)(u_\lambda-u)^+\varphi_R^2\,.
\end{split}
\end{equation}
Now we make the first assumption on $\bar\lambda$ and we suppose that
$\lambda\geq\bar\lambda$, with $\bar\lambda$ large enough in order that
\begin{equation}\nonumber
u \geq\sqrt {2/3}\quad\text{in}\,\,\Sigma_\lambda.
\end{equation}
In this way, for any $x$ such that $u_\lambda(x) \ge u(x)$ we have
\[
g(u_\lambda(x))-g(u(x)) = g'(\xi(x)) (u_\lambda(x)-u(x)) \le - (u_\lambda(x)-u(x))
\]
for some $\xi(x) \in [u(x), u_\lambda(x)]$. Furthermore,
\[
-(\Lambda u_\lambda v_\lambda^2-\Lambda uv^2\big)(u_\lambda-u)^+=-\Lambda v_\lambda^2((u_\lambda-u)^+)^2+\Lambda u (v^2-v_\lambda^2)(u_\lambda-u)^+,
\]
so that, setting
\[
\mathcal C_R\,:=\, \Sigma_\lambda\cap B_R,
\]
we deduce that for any $\theta \in (0,1)$
\begin{equation}\nonumber
\begin{split}
\int_{\mathcal C_R}\,|\nabla (u_\lambda-u)^+|^2\,&\leq \theta \int_{\mathcal C_{2R}}\,|\nabla (u_\lambda-u)^+|^2\\
&+\frac{1}{\theta R^2} \int_{\mathcal C_{2R}}\, ((u_\lambda-u)^+)^2\varphi_R^2-\int_{\mathcal C_{2R}}\, ((u_\lambda-u)^+)^2\varphi_R^2\\
&+\int_{\mathcal C_{2R}}\Lambda u(v+v_\lambda)(v-v_\lambda)^+(u_\lambda-u)^+\varphi_R^2.
\end{split}
\end{equation}
To estimate the last term on the right hand side, we observe that $(v+v_\lambda)\leq 2v$ in the support of $(v-v_\lambda)^+$; then, recalling also that $0<u,u_\lambda<1$ in $\R^N$, we obtain
\begin{equation}\nonumber
\begin{split}
\int_{\mathcal C_{2R}}\Lambda u(v+v_\lambda)(v-v_\lambda)^+(u_\lambda-u)^+\varphi_R^2 & \leq \Lambda \|v\|_{L^\infty(\Sigma_\lambda)} \int_{\mathcal C_{2R}}\,\big( (u_\lambda-u)^+\big)^2\varphi_R^2\\
& +\Lambda \|v\|_{L^\infty(\Sigma_\lambda)}\int_{\mathcal C_{2R}}\,\big( (v-v_\lambda)^+\big)^2\varphi_R^2.
\end{split}
\end{equation}
As a consequence
\begin{equation}\label{C2}
\begin{split}
\int_{\mathcal C_R}\,|\nabla (u_\lambda-u)^+|^2\,&\leq \theta \int_{\mathcal C_{2R}}\,|\nabla (u_\lambda-u)^+|^2\\
&+\frac{1}{\theta R^2} \int_{\mathcal C_{2R}}\, ((u_\lambda-u)^+)^2\varphi_R^2-\int_{\mathcal C_{2R}}\, ((u_\lambda-u)^+)^2\varphi_R^2\\
&+ \Lambda \|v\|_{L^\infty(\Sigma_\lambda)} \int_{\mathcal C_{2R}}\,\big( (u_\lambda-u)^+\big)^2\varphi_R^2\\
&+\Lambda \|v\|_{L^\infty(\Sigma_\lambda)}\int_{\mathcal C_{2R}}\,\big( (v-v_\lambda)^+\big)^2\varphi_R^2.\\
\end{split}
\end{equation}
We proceed further with similar estimates on $(v-v_\lambda)^+$. As above, by \eqref{system} and \eqref{syst lambda}
\begin{equation}\nonumber
\begin{split}
\int_{\Sigma_\lambda}\,|\nabla (v-v_\lambda)^+|^2\varphi_R^2\,&=\,-2\int_{\Sigma_\lambda} \varphi_R (v-v_\lambda)^+ \, \nabla (v-v_\lambda)^+\cdot \nabla\varphi_R\\
&+\int_{\Sigma_\lambda}\big(g(v)-g(v_\lambda)\big)(v-v_\lambda)^+\varphi_R^2\\
&+\int_{\Sigma_\lambda}\big(\Lambda u_\lambda^2 v_\lambda-\Lambda u^2v\big)(v-v_\lambda)^+\varphi_R^2\\
&=-2\int_{\Sigma_\lambda}\varphi_R (v-v_\lambda)^+ \, \nabla (v-v_\lambda)^+\cdot \nabla\varphi_R\\
&+\int_{\Sigma_\lambda}\big(g(v)-g(v_\lambda)\big)(v-v_\lambda)^+\varphi_R^2-\Lambda u^2((v-v_\lambda)^+)^2\varphi_R^2\\
&+\Lambda\int_{\Sigma_\lambda}
v_\lambda(u_\lambda^2-u^2)
(v-v_\lambda)^+\varphi_R^2\,.
\end{split}
\end{equation}
Let now $\varepsilon>0$ be such that $1+\varepsilon< \Lambda$ (recall that $\Lambda>1$). By assumption \eqref{uniform limit}, if necessary enlarging $\bar\lambda$, we can suppose that for any $\lambda> \bar \lambda$
\begin{equation}\label{cond on lambda 2}
u^2>\frac{1+\varepsilon}{\Lambda} \quad \text{in}\,\,\Sigma_\lambda, \quad \text{and} \quad 2 \Lambda \|v\|_{L^\infty(\Sigma_\lambda)} < \min\{1,\varepsilon\}.
\end{equation}
Using the first of these conditions, and the fact that $g'(t) \le 1$ for every $t \in \R$, and that $v_\lambda\leq v$ in the support of $(v-v_\lambda)^+$, we infer that
\begin{equation}\label{C1}
\begin{split}
\int_{\mathcal C_R}\,|\nabla (v-v_\lambda)^+|^2\,&\leq \theta \int_{\mathcal C_{2R}}\,|\nabla (v-v_\lambda)^+|^2\\
&+\frac{1}{\theta R^2} \int_{\mathcal C_{2R}}\, ((v-v_\lambda)^+)^2\varphi_R^2-\varepsilon\int_{\mathcal C_{2R}}\, ((v-v_\lambda)^+)^2\varphi_R^2\\
&+2\Lambda\int_{\mathcal C_{2R}}
v(u_\lambda-u)^+
(v-v_\lambda)^+\varphi_R^2 \\
%\,.\\
%\end{split}
%\end{equation}
%Then, by Young's inequality
%\begin{equation}\label{C1}
%\begin{split}
%\int_{\mathcal C_R}\,|\nabla (v-v_\lambda)^+|^2\,
%
&\leq \theta \int_{\mathcal C_{2R}}\,|\nabla (v-v_\lambda)^+|^2\\
&+\frac{1}{\theta R^2} \int_{\mathcal C_{2R}}\, ((v-v_\lambda)^+)^2\varphi_R^2-\varepsilon\int_{\mathcal C_{2R}}\, ((v-v_\lambda)^+)^2\varphi_R^2\\
&+ \Lambda \|v\|_{L^\infty(\Sigma_\lambda)}\int_{\mathcal C_{2R}}\,\big( (u_\lambda-u)^+\big)^2\varphi_R^2\\
&+\Lambda \|v\|_{L^\infty(\Sigma_\lambda)}\int_{\mathcal C_{2R}}\,\big( (v-v_\lambda)^+\big)^2\varphi_R^2.\\
\end{split}
\end{equation}

Now we set
\[
\mathcal L_\lambda(R)\,:=\, \int_{\mathcal C_R}\,|\nabla (u_\lambda-u)^+|^2\,+\,\int_{\mathcal C_R}\,|\nabla (v-v_\lambda)^+|^2,
\]
observing that $\mathcal L_\lambda(R)\leq C R^N$ since $|\nabla u|\in L^\infty(\mathbb{R}^N)$ and $|\nabla v|\in L^\infty(\mathbb{R}^N)$.
Having in mind Lemma \ref{bLe:L(R)}, we fix $\theta:= 2^{-(N+1)}$.
%\[
%\theta\,:=\,\frac{1}{2^{N+1}}\,.
%\]
Adding \eqref{C2} and \eqref{C1}, we deduce that for any $\varepsilon>0$ fixed as above, $\lambda>\bar \lambda$ fixed and $R>1$, it results that
%\begin{equation}\label{C23}
\[
\begin{split}
\mathcal L_\lambda (R)\leq \theta\mathcal L (2R)
%&+\frac{1}{\theta R^2} \int_{\mathcal C_{2R}}\, ((u_\lambda-u)^+)^2\varphi_R^2+\frac{1}{\theta R^2} \int_{\mathcal C_{2R}}\, ((v-v_\lambda)^+)^2\varphi_R^2\\
&
+ \left(2\Lambda \|v\|_{L^\infty(\Sigma_\lambda)}+\frac{1}{\theta R^2}-1\right) \int_{\mathcal C_{2R}}\,\big( (u_\lambda-u)^+\big)^2\varphi_R^2\\
&+\left(2\Lambda \|v\|_{L^\infty(\Sigma_\lambda)}+\frac{1}{\theta R^2}-\varepsilon\right)\int_{\mathcal C_{2R}}\,\big( (v-v_\lambda)^+\big)^2\varphi_R^2.
\end{split}
\]
%\end{equation}
Recalling \eqref{cond on lambda 2}, this implies that for sufficiently large $R$ we have
\[
\mathcal L_\lambda (R)\leq \theta \mathcal L_\lambda (2R)\,.
\]
Therefore we are in position to apply Lemma \ref{bLe:L(R)}, and we conclude that
\[
\mathcal L_\lambda (R)=0.
\]
This holds for any $\lambda>\bar \lambda$, and hence, recalling that $u=u_\lambda$ and $v=v_\lambda$ on $\pa \Sigma_\lambda$, the proof is complete.
%that
%\[
%u\geq u_\lambda \,\,\text{in}\,\,\Sigma_\lambda\quad\text{and}\quad v\leq v_\lambda \,\,\text{in}\,\,\Sigma_\lambda\,,
%\]
%for $\lambda\geq\bar\lambda$.
%This immediately implies that
%\[
%u_{x_N}\geq 0 \,\,\text{in}\,\,\Sigma_{\bar\lambda}\quad\text{and}\quad v_{x_N}\leq 0 \,\,\text{in}\,\,\Sigma_{\bar\lambda}\,.
%\]
%Then we have
%\begin{equation}\nonumber
%\begin{split}
%-\Delta u_{x_N}\,&=g'(u)u_{x_N} -\Lambda u_{x_N}v^2-2\Lambda uvv_{x_N}\\
%&\geq g'(u)u_{x_N} -\Lambda u_{x_N}v^2\geq - K  u_{x_N}\,.
%\end{split}
%\end{equation}
%for some $K>0$.
%By the strong maximum principle and $(h_\infty)$, it follows that
%\[
%u_{x_N}> 0 \,\,\text{in}\,\,\Sigma_{\bar\lambda}\,.
%\]
%In the same way
%\begin{equation}\nonumber
%\begin{split}
%-\Delta v_{x_N}\,&=g'(v)v_{x_N} -\Lambda u^2v_{x_N}-2\Lambda uvu_{x_N}\\
%&\leq g'(v)v_{x_N} -\Lambda u^2v_{x_N}\\
%&\leq - K  v_{x_N}
%\end{split}
%\end{equation}
%and again
%by the strong maximum principle and $(h_\infty)$, it follows that
%\[
%v_{x_N}< 0 \,\,\text{in}\,\,\Sigma_{\bar\lambda}\,.
%\]
\end{proof}

As already observed, Lemma \ref{monot at infinity} implies that the quantity $\tilde \lambda:= \inf \Theta$ (with $\Theta$ defined in \eqref{moving thesis 1}) is either a real number, or $-\infty$. We can actually rule out the former possibility, thus completing the proof of Proposition \ref{prop: mon in x_N}.

\begin{lem}
It results that $\tilde \lambda =-\infty$.
\end{lem}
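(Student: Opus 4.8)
The plan is to argue by contradiction: suppose $\tilde\lambda \in \R$. By continuity (using that $u,v$ are bounded and smooth, with bounded derivatives, by Theorem \ref{universal upper bound} and elliptic estimates) one has $u_{\tilde\lambda} \le u$ and $v_{\tilde\lambda}\ge v$ in $\Sigma_{\tilde\lambda}$, so $\tilde\lambda \in \Theta$ and $\Theta = [\tilde\lambda,+\infty)$. The goal is to show that the inequalities can be pushed a little further, i.e.\ that $u_\lambda \le u$ and $v_\lambda \ge v$ in $\Sigma_\lambda$ for $\lambda$ slightly smaller than $\tilde\lambda$, contradicting the definition of the infimum. The key structural point is that, after the reflection, the difference functions $w_1 := (u_\lambda - u)$ and $w_2 := (v - v_\lambda)$ satisfy a \emph{cooperative} (and, in the relevant region, with good-signed zero-order terms) linear system on $\Sigma_\lambda$, with $w_1,w_2 \le 0$ on $\partial\Sigma_\lambda$. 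The competitive sign of $\Lambda$ is exactly what makes the reflected system cooperative in $w_1,w_2$: the cross terms $\Lambda u (v^2 - v_\lambda^2)$ and $\Lambda v_\lambda(u_\lambda^2-u^2)$ appear with the right sign to couple $w_1^+$ and $w_2^+$ the way we want.

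First I would set up, exactly as in Lemma \ref{monot at infinity}, the energy inequalities for $(u_\lambda-u)^+\varphi_R^2$ and $(v-v_\lambda)^+\varphi_R^2$, but now on a general strip $\Sigma_\lambda$ with $\lambda$ close to $\tilde\lambda$. The new difficulty compared to the ``monotonicity at infinity'' lemma is that we no longer have $u$ close to $1$ or $v$ close to $0$ uniformly on $\Sigma_\lambda$, so the favorable zero-order terms ($-(u_\lambda-u)^+$ from $g$, the smallness of $\|v\|_{L^\infty(\Sigma_\lambda)}$) are not available. Here is where the assumption $\tilde\lambda\in\Theta$ must be exploited: since $u_{\tilde\lambda}\le u$ and $v\le v_{\tilde\lambda}$ already hold, the positive parts $(u_\lambda-u)^+$ and $(v-v_\lambda)^+$ are supported, for $\lambda<\tilde\lambda$, in a region that is ``small'' in a measure-theoretic sense as $\lambda\uparrow\tilde\lambda$ — essentially in $\Sigma_\lambda\setminus\Sigma_{\tilde\lambda}$ together with a set where strict inequality at $\tilde\lambda$ degenerates. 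One then runs the same cut-off/Lemma \ref{bLe:L(R)} machinery, letting $R\to\infty$ first to obtain a differential-type inequality on $\Sigma_\lambda$ of the form
\[
\int_{\Sigma_\lambda}|\nabla(u_\lambda-u)^+|^2 + \int_{\Sigma_\lambda}|\nabla(v-v_\lambda)^+|^2 \le C\int_{S_\lambda}\Big( ((u_\lambda-u)^+)^2 + ((v-v_\lambda)^+)^2\Big),
\]
where $S_\lambda$ is the small support region, and then using a Poincaré/Sobolev-type inequality on $S_\lambda$ (whose constant tends to $0$ as $|S_\lambda|\to 0$, by the strict monotonicity $\partial_N u>0,\partial_N v<0$ on the boundary hyperplane $\{x_N=\tilde\lambda\}$ coming from the strong maximum principle applied at $\tilde\lambda$) to absorb the right-hand side for $\lambda$ sufficiently close to $\tilde\lambda$. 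This forces $(u_\lambda-u)^+\equiv 0$ and $(v-v_\lambda)^+\equiv 0$, i.e.\ $\lambda\in\Theta$, contradiction.

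The main obstacle I expect is precisely making the ``small support'' argument rigorous without a narrow-domain assumption: one needs to convert the information ``$u_{\tilde\lambda}\le u$, with equality only on a negligible set'' plus ``$\partial_N u>0$ on $\{x_N=\tilde\lambda\}$'' into a usable smallness of the weighted Poincaré constant over $\{(u_\lambda-u)^+>0\}$, uniformly in the unbounded tangential directions $x'$. This is where uniformity of the limits \eqref{uniform limit} in $x'$ is again essential: it guarantees that the region where $u$ and $v$ are far from the equilibria $(1,0)$, $(0,1)$ — hence where the reflected system could misbehave — is a slab of bounded width in $x_N$, so the geometry is effectively one-dimensional and the Poincaré constants are controlled uniformly. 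A clean way to organize this is to first establish, via the strong maximum principle and Hopf's lemma applied to the (cooperative, on the relevant region) system at $\lambda = \tilde\lambda$, that either $u_{\tilde\lambda}\equiv u$ and $v_{\tilde\lambda}\equiv v$ on $\Sigma_{\tilde\lambda}$ (which, combined with $\tilde\lambda$ finite and the limits \eqref{uniform limit}, is readily excluded), or else strict inequalities $u_{\tilde\lambda}<u$, $v_{\tilde\lambda}>v$ hold throughout $\Sigma_{\tilde\lambda}$; and then to use the latter, together with the uniform limits to control behavior at spatial infinity, to derive the measure smallness of $\{(u_\lambda-u)^+>0\}\cup\{(v-v_\lambda)^+>0\}$ as $\lambda\uparrow\tilde\lambda$ and close the argument. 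Once $\tilde\lambda=-\infty$ is obtained, \eqref{moving thesis 0} holds for all $\lambda$, and the strong maximum principle upgrades the non-strict inequalities to $\partial_N u>0$, $\partial_N v<0$, completing the proof of Proposition \ref{prop: mon in x_N}.
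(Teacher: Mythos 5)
Your overall strategy (assume $\tilde\lambda\in\R$, use the position $\tilde\lambda\in\Theta$ to get strict separation at level $\tilde\lambda$, and then push the plane slightly below $\tilde\lambda$ by an absorption argument on the small region where the inequalities might fail) is the classical bounded-domain moving-plane scheme, and its crucial quantitative step is not justified here. The gap is the claim that, as $\lambda\uparrow\tilde\lambda$, the set $\{(u_\lambda-u)^+>0\}\cup\{(v-v_\lambda)^+>0\}$ is ``small'' in a sense that makes a Poincar\'e-type constant small. In the present setting $\Sigma_\lambda$ is unbounded in the $x'$ directions, so this set always has infinite measure a priori: it is contained in the union of the thin slab $\{\lambda<x_N\le\tilde\lambda\}$ (infinite measure, only small width) and the set where $u-u_{\tilde\lambda}\le C|\lambda-\tilde\lambda|$, and the latter cannot be controlled by the information you actually have. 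The strong maximum principle gives $u>u_{\tilde\lambda}$, $v<v_{\tilde\lambda}$ pointwise in the open half-space, and Hopf gives $\partial_N u>0$ pointwise on $\{x_N=\tilde\lambda\}$, but neither yields a \emph{uniform} lower bound in the unbounded $x'$ directions; the gap $u-u_{\tilde\lambda}$ may degenerate as $|x'|\to\infty$ at fixed heights in the middle slab. The uniform limits \eqref{uniform limit} control only $x_N\to\pm\infty$, not $|x'|\to\infty$ at bounded $x_N$, so your appeal to them to ``derive the measure smallness'' does not close this. Moreover, splitting off the region $\{x_N>\sigma\}$ to reuse the machinery of Lemma \ref{monot at infinity} there is not free either: the test function no longer vanishes on $\{x_N=\sigma\}$, and the needed sign of $u_\lambda-u$ on that interface is part of what is being proved.

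The paper's proof handles exactly this possible degeneration at $x'$-infinity by a different device: arguing by contradiction, it picks points $x^i\in\Sigma_{\lambda_i}$, $\lambda_i\uparrow\tilde\lambda$, where one of the inequalities fails, shows $x^i_N$ stays bounded using \eqref{uniform limit}, translates in $x'$, and passes to a $\mathcal{C}^2_{loc}$ limit profile $(u^\infty,v^\infty)$ which still satisfies \eqref{uniform limit} and $u^\infty\ge u^\infty_{\tilde\lambda}$, $v^\infty\le v^\infty_{\tilde\lambda}$ in $\Sigma_{\tilde\lambda}$; the strong maximum principle forces the touching point onto the hyperplane $\{x_N=\tilde\lambda\}$, and Hopf's lemma at that point contradicts $\partial_N u^\infty(0',\tilde\lambda)\le 0$, which follows from the failing inequalities. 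This translation--compactness step is precisely what converts the pointwise strict information into something usable uniformly in $x'$; your sketch would need an equivalent step (a uniform lower bound for $u-u_{\tilde\lambda}$, or for $\partial_N u$ near the hyperplane, proved by contradiction and compactness) to make the ``small support / small Poincar\'e constant'' absorption rigorous. As written, the proposal is incomplete at its central point.
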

\begin{proof}
The proof of this fact is similar to the one of step 2 of Proposition 5.1 in \cite{FaSo}. We report the details for the sake of completeness. Assume by contradiction that $\tilde \lambda> -\infty$ is a real number. In this case by continuity $\Theta = [\tilde \lambda,+\infty)$, and by definition of $\inf$ there exist sequences $(\lambda_i) \subset (-\infty,\tilde \lambda)$ and $(x^i) \subset \Sigma_{\la_i}$ such that $\la_i \to \tilde \lambda$ as $i \to \infty$, and at least one between
\begin{subequations}
\begin{align}
& u_{\la_i}(x^i) > u(x^i)  \label{eq8}\\
& v_{\la_i}(x^i)<v(x^i) \label{eq13}
\end{align}
\end{subequations}
holds true for every $i$.

Assume that \eqref{eq8} holds true. We claim that the sequence $(x_N^i) \subset \R$ is bounded. If not, as $x_N^i>\la_i$ and $\la_i$ is bounded, up to a subsequence $x_N^i \to +\infty$ as $i \to \infty$. It follows that $2 \la_i -x_N^i \to -\infty$, and in light of assumption \eqref{uniform limit} we obtain
\[
\lim_{i \to \infty} u_{\la_i}(x^i) = \lim_{i \to \infty} u((x^i)',2\la_i -x_N^i)  = 0 \quad \text{and} \quad \lim_{i \to \infty} u(x^i) = 1,
\]
in contradiction with \eqref{eq8} for $i$ sufficiently large. Hence the claim is proved and, up to a subsequence, $x_N^i \to x_N^\infty$ as $i \to \infty$.

Let us set
\[
u^i(x):= u((x^i)'+x',x_N) \quad \text{and} \quad v^i(x):= v((x^i)'+x',x_N).
\]
Since $(u,v)$ is bounded, by standard gradient estimates $|\nabla u|, |\nabla v| \in L^\infty(\R^N)$. Thus $\{(u^i,v^i)\}$ is uniformly bounded and equi-Lipschitz-continuous, and by elliptic estimates up to a subsequence $(u^i,v^i)$ converges via a diagonal process in $\mathcal{C}^2_{loc}(\R^N)$ to a limit $(u^\infty, v^\infty)$, still solution of \eqref{system} in $\R^N$.

We wish to show that $x_N^\infty=\tilde \lambda$. From the absurd assumption, equation \eqref{eq8}, we obtain
\begin{equation}\label{eq9}
\begin{split}
u_{\tilde \lambda}^{\infty}(0',x_N^\infty) &= u^\infty(0',2{\tilde \lambda} - x_N^\infty) = \lim_{i \to \infty} u( (x^i)', 2\la_i-x_N^i) \\
&= \lim_{i \to \infty} u_{\la_i}(x^i) \ge \lim_{i \to \infty} u(x^i) = u^\infty (0',x_N^{\infty}).
\end{split}
\end{equation}
On the other hand, we observe that $((x^i)'+x',x_N) \in \Sigma_{\tilde \lambda}$ whenever $(x',x_N) \in \Sigma_{\tilde \lambda}$, and by definition $u_{\tilde \lambda} \le u$ in $\Sigma_{\tilde \lambda}$. Consequently, by the convergence of $u^i$ to $u^\infty$ we deduce that
\begin{align*}
u^{\infty}_{\tilde \lambda}(x',x_N) &= \lim_{i \to \infty} u^i(x',2{\tilde \lambda} - x_N) = \lim_{i \to \infty} u((x^i)'+x',2 {\tilde \lambda}- x_N) \\
& \le \lim_{i \to \infty} u((x^i)'+x',x_N) = \lim_{i \to \infty} u^i(x',x_N) = u^\infty(x',x_N)
\end{align*}
for every $(x',x_N) \in \Sigma_{\tilde \lambda}$. Analogously, as $v_{\tilde \lambda} \ge v$ in $\Sigma_{\tilde \lambda}$, we have $v^\infty_{\tilde \lambda} \ge v^\infty$ in $\Sigma_{\tilde \lambda}$.\\
Now,
\begin{equation}\label{eq12}
\begin{cases}
-\Delta (u^\infty - u^\infty_{\tilde \lambda}) + c(x) (u^\infty- u^\infty_{\tilde \lambda})= \Lambda ((v^\infty_{\tilde \lambda})^2 - (v^\infty)^2) u^\infty_{\tilde \lambda} \ge 0 & \text{in $\Sigma_{\tilde \lambda}$} \\
u^\infty-u^\infty_{\tilde \lambda} \ge 0 & \text{in $\Sigma_{\tilde \lambda}$} \\
u^\infty -u^\infty_{\tilde \lambda} = 0 & \text{on $\pa \Sigma_{\tilde \lambda}$},
\end{cases}
\end{equation}
with $c \in L^\infty(\Sigma_{\tilde \lambda})$ defined by
\[
c(x):= \Lambda v^2(x)- c_0(x), \qquad c_0(x):= \begin{cases} \frac{g(u_{\tilde \lambda}(x))- g(u(x))}{u_{\tilde \lambda}(x)-u(x)} & \text{if $u_{\tilde \lambda}(x) \neq u(x)$} \\
g'(u(x)) & \text{if $u_{\tilde \lambda}(x) = u(x)$} \end{cases}
\]
(recall that $g(t) = t-t^3$).
%Furthermore, $u^\infty-u_{\tilde \lambda}^\infty$ is not identically $0$: indeed by assumption \eqref{uniform limit}
%\[
%\lim_{x_N \to +\infty}   u^\infty(x',x_N) - u^\infty_{\tilde \lambda}(x',x_N) = +\infty.
%\]
Hence, the strong maximum principle together with assumption \eqref{uniform limit} implies that necessarily $u^\infty- u^\infty_{\tilde \lambda} > 0$ in $\Sigma_{\tilde \lambda}$, and a comparison with \eqref{eq9} reveals that $x_N^\infty={\tilde \lambda}$, as desired.

At this point we are ready to reach a contradiction. On one side, by the absurd assumption \eqref{eq8}
\[
0< u_{\la_i}(x^i) - u(x^i) = u^i(0', 2\la_i -x^i_N)-u^i(0',x_N) = 2\pa_N u^i (0', \xi^i)(\la_i-x^i_N) \qquad \forall i,
\]
for some $\xi^i \in (2\lambda_i -x_N^i,x_N^i)$. As $\la_i < x_N^i$ for every $i$ this implies $\pa_N u^i(x', \xi^i_N)< 0$ for every $i$, and passing to the limit we infer that
\begin{equation}\label{eq11}
\pa_N u^\infty(0', {\tilde \lambda}) \le 0,
\end{equation}
where we used the fact that $\la_i \le \xi^i \le x_N^i$ with $\la_i,x_N^i \to {\tilde \lambda}$.

On the other side, thanks to  \eqref{eq12} and the fact that $u^\infty-u^\infty_{\tilde \lambda}>0$ in $\Sigma_{\tilde \lambda}$, the Hopf Lemma implies that
\[
-2\pa_N u^\infty(0',{\tilde \lambda}) =\pa_{-e_N} (u^\infty(0',{\tilde \lambda}) -u^\infty_{\tilde \lambda}(0',{\tilde \lambda}))<0,
\]
in contradiction with \eqref{eq11}.

The above argument establishes that \eqref{eq8} cannot occur. With minor changes, we can show that also \eqref{eq13} cannot be verified, and in conclusion $\tilde \lambda$ cannot be finite.
\end{proof}

\begin{proof}[Proof of Proposition \ref{prop: mon in x_N}]
By \eqref{moving thesis 1}, we directly deduce that $\pa_N u \ge 0$ and $\pa_N v \le 0$ in $\R^N$. Since
\[
\begin{cases}
-\Delta (\pa_N u) + (3 u^2  + \Lambda v^2 -1) \pa_N u = -2 \Lambda uv \pa_N v \ge 0 & \text{in $\R^N$} \\
-\Delta (\pa_N v) + (3 v^2  + \Lambda u^2 -1) \pa_N v = -2 \Lambda uv \pa_N u \le 0 & \text{in $\R^N$},
\end{cases}
\]
the strict inequality follows by the strong maximum principle and \eqref{uniform limit}.
\end{proof}

\begin{remark}\label{rem: su Alama}
In this section we established the monotonicity of any solution to \eqref{system} satisfying assumption \eqref{uniform limit}. In particular, this result holds in dimension $N=1$, and combined with Theorem 1.3 in \cite{AftSou} implies the uniqueness of the positive heteroclinic connection between $(0,1)$ and $(1,0)$. This permits to verify that, in the setting considered here, the assumptions in \cite{AlaBroGui} are not satisfied. Indeed, one of the crucial assumption in \cite{AlaBroGui} is the non-uniqueness (modulo translations) of the 1D minimal heteroclinic connection between two global minima of the potential, in this case $(0,1)$ and $(1,0)$. But any such connection is positive by minimality (as shown in \cite[page 582]{AlaBro1}), and hence unique by the above discussion. In addition, we can also observe that to apply \cite[Theorem 1.1]{AlaBroGui} one need the odd-symmetry of one component of the connection, which cannot be verified by positive solutions.
\end{remark}

%\begin{remark}
%We mention that the uniqueness of the 1D heteroclinic connection was conjectured in \cite[Section 5]{AlaBro1}, and proved in \cite{AftSou} within the class of solutions with a monotone component. Proposition \ref{prop: mon in x_N} permits to remove the monotonicity assumption, thus closing the conjecture in complete generality, as stated in Corollary \ref{corol uniqueness} above.
%\end{remark}

\section{$1$-dimensional symmetry}\label{sec: sym}

In this section we pass from the monotonicity in $x_N$ to the monotonicity in all the directions of the upper hemi-sphere $\mathbb{S}^{N-1}_+:=\left\{ \nu \in \mathbb{S}^{N-1}: \langle \nu, e_N \rangle >0 \right\}$. We follow the idea introduced in \cite{Fa99} (see also \cite{FaVaInd}), which was already successively adapted in \cite{FaSo} to deal with competitive systems. In any case, with respect to \cite{FaSo}, the adaptation here presents several differences.

\begin{prop}\label{prop: symmetry}
For every $\nu \in \mathbb{S}^{N-1}_+$, we have
\[
\pa_\nu u >0 \quad \text{and} \quad \pa_\nu v<0 \qquad \text{in $\R^N$}.
\]
\end{prop}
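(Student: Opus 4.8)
\textbf{Proof plan for Proposition \ref{prop: symmetry}.}

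The plan is to exploit the monotonicity $\pa_N u > 0$, $\pa_N v < 0$ already established in Proposition \ref{prop: mon in x_N}, together with the uniform limits \eqref{uniform limit}, and run the sliding/rotating argument of \cite{Fa99}. Fix a direction $\nu \in \S^{N-1}_+$, so that $\langle \nu, e_N\rangle > 0$. For $t \in \R$ consider the family of functions $u^t(x) := u(x + t\nu)$, $v^t(x) := v(x+t\nu)$, which again solve \eqref{system}, and set
\[
w_1^t := u^t - u, \qquad w_2^t := v - v^t.
\]
The goal is to prove that $w_1^t \ge 0$ and $w_2^t \ge 0$ in $\R^N$ for every $t > 0$; differentiating in $t$ at $t = 0^+$ then gives $\pa_\nu u \ge 0$, $\pa_\nu v \le 0$, and the strict inequalities follow exactly as at the end of Section \ref{sec: monot}, applying the strong maximum principle to the linearized system satisfied by $\pa_\nu u$ and $\pa_\nu v$ (which is cooperative after the sign flip on the second component, and $uv > 0$).

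The core of the argument is to show the set $T := \{ t > 0 : w_1^s \ge 0 \text{ and } w_2^s \ge 0 \text{ in } \R^N \ \forall s \ge t\}$ is all of $(0,+\infty)$. First I would check that $T$ is nonempty: for $t$ large, the displacement $t\nu$ has a large positive $e_N$-component, $t\langle\nu,e_N\rangle$, so using the uniform limits \eqref{uniform limit} one sees that $u^t$ is close to $1$ and $v^t$ close to $0$ outside a slab, while on any fixed slab a compactness/translation argument together with the established $x_N$-monotonicity forces the desired inequalities — this is the analogue of Lemma \ref{monot at infinity}, and one can in fact deduce it directly: since $\pa_N u>0$, for $x_N$ in a bounded range and $t$ huge, $u^t(x) = u(x+t\nu)$ has large $e_N$-coordinate and is $\ge u(x)$ by monotonicity in $x_N$ combined with control of the $x'$-translation via the uniform limit. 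Then $\hat t := \inf T$ is finite or $-\infty$ — wait, it is $\ge 0$ by definition; I would show $\hat t = 0$. Suppose $\hat t > 0$. By continuity $w_1^{\hat t}, w_2^{\hat t} \ge 0$, and they satisfy a cooperative linear system of the form
\[
\begin{cases}
-\Delta w_1^{\hat t} + c_1(x) w_1^{\hat t} = \Lambda \big((v)^2 - (v^{\hat t})^2\big) u^{\hat t} \ge \text{(nonneg.\ combination of } w_2^{\hat t}) \\
-\Delta w_2^{\hat t} + c_2(x) w_2^{\hat t} = \Lambda\big((u^{\hat t})^2 - u^2\big) v \ge \text{(nonneg.\ combination of } w_1^{\hat t}),
\end{cases}
\]
with $c_1, c_2 \in L^\infty$. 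If either $w_i^{\hat t} \equiv 0$ one gets a contradiction with the uniform limits (the displaced solution would coincide with the original, impossible since the limits as $x_N \to \pm\infty$ are translation-sensitive in the $e_N$-direction because $\langle\nu,e_N\rangle>0$); hence by the strong maximum principle $w_1^{\hat t}, w_2^{\hat t} > 0$ in $\R^N$.

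The final step is to push past $\hat t$: I must show $w_1^t, w_2^t \ge 0$ persists for $t$ slightly below $\hat t$, contradicting $\hat t = \inf T > 0$. The obstruction is that $\R^N$ is unbounded, so positivity at $\hat t$ does not by itself give a uniform lower bound. I would split $\R^N$ into a region where $x_N$ is large positive (there $u^t$ near $1$, $v^t$ near $0$, and the desired inequalities are automatic from the uniform limits, uniformly for $t$ near $\hat t$), a region where $x_N$ is large negative (symmetric), and a compact slab $K = \{|x_N| \le M\}$; on $K$, strict positivity of $w_i^{\hat t}$ plus a uniform positive lower bound on compacts (again via the translation-compactness of $\{(u^i,v^i)\}$ as in Section \ref{sec: monot}) and continuity in $t$ give $w_i^t \ge 0$ on $K$ for $t$ near $\hat t$. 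Combining the three regions yields $w_i^t \ge 0$ on all of $\R^N$, the contradiction. I expect this last "open-ness of $T$" step — making the three-region decomposition genuinely uniform in $t$ near $\hat t$, and ruling out escaping-to-infinity sequences of would-be negativity points exactly as in the proof of Lemma \ref{monot at infinity} (using \eqref{uniform limit} to kill the case $x_N^i \to \pm\infty$ and the limiting cooperative system plus Hopf's lemma to kill the bounded case) — to be the main technical obstacle, since it is where the competitive coupling and the unboundedness of the domain interact most delicately.
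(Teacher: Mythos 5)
Your plan follows a genuinely different route from the paper (the paper never slides in the direction $\nu$: it works directly with the directional derivatives, proves $\pa_N u\ge\eps$, $\pa_N v\le-\eps$ on slabs by a translation--compactness argument, transfers this to all $\nu$ near $e_N$ via Lipschitz dependence of $\pa_\nu u,\pa_\nu v$ on $\nu$, closes the argument in the half-spaces $\{\pm x_N>\sigma\}$ with the cut-off estimates and Lemma \ref{bLe:L(R)}, and finally uses a connectedness argument on $\S^{N-1}_+$). The sliding strategy is in principle viable, but as written it has a genuine gap at its two crucial steps. Both for the start of the sliding ($T\neq\emptyset$) and for pushing below $\hat t$, you assert that in the unbounded regions $\{x_N>M\}$ and $\{x_N<-M\}$ the orderings $u^t\ge u$, $v^t\le v$ are ``automatic from the uniform limits''. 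They are not: in $\{x_N>M\}$ both $u^t$ and $u$ are close to $1$ and both $v^t$ and $v$ are close to $0$, and proximity of two functions to a common constant gives no ordering between them (moreover, in $\{x_N<-M\}$ your large-$t$ description is wrong: where $x_N\ll-t\langle\nu,e_N\rangle$ one has $u^t$ close to $0$, not to $1$). These outer regions are exactly where the hard work lies; in the paper they are handled by the Kato/cut-off estimates combined with Lemma \ref{bLe:L(R)} (see Lemmas \ref{monot at infinity} and \ref{lem: step 3}), whose coercivity comes from the fact that, since $\Lambda>1$, the zeroth-order coefficients of the linearization become strictly negative near the equilibria $(1,0)$ and $(0,1)$, while the competitive coupling terms carry a small factor ($u$ or $v$ close to $0$). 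Without an argument of this type neither the nonemptiness of $T$ nor the openness step is justified.

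Two further points. First, your slab $K=\{|x_N|\le M\}$ is not compact, so strict positivity of $w_i^{\hat t}$ there does not give a uniform lower bound; the translation-compactness fix you mention is the right one (an infimum-zero sequence produces a limit with $u^\infty(\cdot+\hat t\nu)\equiv u^\infty$, i.e.\ periodicity in a direction with positive $e_N$-component, contradicting \eqref{uniform limit}), but note that at $\hat t>0$ there is no boundary hyperplane, so the Hopf-lemma ending of Section \ref{sec: monot} is not available verbatim and this step must be rewritten. Second, your three-region decomposition overlooks the intermediate band where $x$ lies in the region where $(u,v)$ is near $(0,1)$ but $x+t\nu$ lies in the transition zone of the solution: there neither component of $(u^t,v^t)$ is near its limit, the natural zeroth-order coefficient for the $v$-comparison need not be negative, and the coupling coefficient is not small, so the test-function estimate does not close as in the outer regions. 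It can be repaired -- for instance, on the support of $(v^t-v)^+$ one has $v^t>v\ge 1-\delta$, which makes the Allen--Cahn part of the coefficient close to $-2$, and Theorem \ref{universal upper bound}(ii) then forces $u^t\le\sqrt{2\delta}$, restoring smallness of the coupling -- but this requires ideas absent from your proposal. As it stands, the plan identifies the correct skeleton but leaves unproved precisely the steps where the structure $\Lambda>1$ must be used.
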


We divide the proof in several steps.

\begin{lem}\label{lem: step 1 2}
Let $\sigma >0$ be arbitrarily chosen. There exists an open neighborhood $\mathcal{O}_{e_N}$ of $e_N$ in $\mathbb{S}^{N-1}$ such that
%\begin{equation}\label{eq16}
\[
\frac{\pa u}{\pa \nu}  (x) > 0 \quad \text{and} \quad \frac{\pa v}{\pa \nu}(x) <0 \quad \forall x \in \overline{S_\s}, \ \forall \nu \in \mathcal{O}_{e_N},
\]
%\end{equation}
where $S_\sigma:= \R^{N-1} \times (-\sigma,\sigma)$.
\end{lem}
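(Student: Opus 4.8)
The idea is to use the already-established monotonicity $\pa_N u > 0$, $\pa_N v < 0$ (Proposition \ref{prop: mon in x_N}) together with the fact that on the compact-in-$x_N$ strip $\overline{S_\s}$ the function $\pa_N u$ stays bounded away from $0$, and then perturb the direction. More precisely, first I would observe that $w:=\pa_N u$ and $z:=\pa_N v$ satisfy the linear system
\begin{equation}\nonumber
\begin{cases}
-\Delta w + (3u^2+\Lambda v^2-1)w = -2\Lambda uv\,z \ge 0 & \text{in }\R^N\\
-\Delta z + (3v^2+\Lambda u^2-1)z = -2\Lambda uv\,w \le 0 & \text{in }\R^N,
\end{cases}
\end{equation}
with $w>0$, $z<0$ everywhere, all coefficients bounded (by Theorem \ref{universal upper bound} and the gradient estimates). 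The key quantitative point is a positive lower bound: there exists $c_\s>0$ such that $\pa_N u \ge c_\s$ and $\pa_N v \le -c_\s$ on the \emph{whole} slab $\overline{S_\s}$. For $x'$ in a compact set this is immediate by continuity and strict positivity; the issue is uniformity as $|x'|\to\infty$. This uniformity follows from the uniform limits \eqref{uniform limit}: a standard translation-compactness argument shows that if $\pa_N u(x^k)\to 0$ along $x^k=((x^k)',x^k_N)$ with $x^k_N\in[-\s,\s]$, then translating $u^k:=u(\cdot+((x^k)',0))$ and passing to a $C^2_{loc}$ limit $u^\infty$ (still a solution, still with $\pa_N u^\infty\ge 0$, $\pa_N v^\infty\le 0$, and still satisfying \eqref{uniform limit} uniformly), one gets $\pa_N u^\infty(0',x^\infty_N)=0$ at an interior point, contradicting the strong maximum principle applied to the equation for $\pa_N u^\infty$ (here the uniform limits are what prevent $u^\infty$ from being identically constant in $x_N$). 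The same works for $\pa_N v$.

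Once the lower bound $c_\s$ is in hand, the conclusion is a soft perturbation argument. Write $\pa_\nu u = \langle \nabla u, \nu\rangle = \nu_N \pa_N u + \langle \nabla_{x'} u, \nu'\rangle$ where $\nu=(\nu',\nu_N)$. By the gradient bounds, $|\nabla u|\le M$ uniformly on $\R^N$ for some $M=M(N)$, hence on $\overline{S_\s}$,
\begin{equation}\nonumber
\pa_\nu u \ge \nu_N c_\s - |\nu'|\,M.
\end{equation}
Choosing the neighborhood $\mathcal{O}_{e_N}$ of $e_N$ so small that every $\nu\in\mathcal{O}_{e_N}$ has $\nu_N > 1/2$ and $|\nu'| < c_\s/(2M)$, we get $\pa_\nu u \ge c_\s/2 - c_\s/2 \cdot \tfrac{1}{1} > 0$ on $\overline{S_\s}$ (adjusting constants); symmetrically $\pa_\nu v \le -\nu_N c_\s + |\nu'| M < 0$. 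This neighborhood depends on $\s$ through $c_\s$, which is fine since $\s$ is fixed at the outset.

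The main obstacle is the uniform-in-$x'$ lower bound on $\pa_N u$ and $\pa_N v$ over the unbounded slab $\overline{S_\s}$; everything else is either already proved or a routine continuity/perturbation estimate. I would carry out the steps in the order: (1) recall the linearized system and the strict sign of $\pa_N u,\pa_N v$; (2) establish the uniform bound $c_\s$ via translation compactness and the strong maximum principle, crucially invoking \eqref{uniform limit} to exclude the degenerate limit; (3) deduce the conclusion for nearby directions by the elementary decomposition of $\pa_\nu$ and the global gradient bound.
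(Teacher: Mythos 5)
Your proposal is correct and follows essentially the same route as the paper: the uniform bound $\pa_N u \ge c_\sigma$, $\pa_N v \le -c_\sigma$ on $\overline{S_\sigma}$ via translation--compactness, the linearized equation, the strong maximum principle, and \eqref{uniform limit} to exclude the degenerate limit is exactly the paper's first claim, and your perturbation step via the global gradient bound is just an explicit form of the paper's Lipschitz continuity of $\nu \mapsto (\pa_\nu u,\pa_\nu v)$ in the sup norm. No gaps worth noting.
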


\begin{proof}
Let $\sigma>0$ be arbitrarily chosen. At first, we claim that there exists $\eps=\eps(\sigma)>0$ such that
\begin{equation}\label{th step 1}
\pa_N u(x) \ge \eps \quad \text{and} \quad \pa_N v(x) \le -\eps \quad \forall x \in \overline{S}_\sigma.
\end{equation}
By contradiction, assume that there exists $(x^i) \subset S_\sigma$ such that at least one between
\begin{subequations}
\begin{align}
& \lim_{i \to +\infty} \pa_N u(x^i)= 0 \label{eq14} \\
& \lim_{i \to +\infty} \pa_N v(x^i)= 0 \label{eq15}
\end{align}
\end{subequations}
holds true for every $i$. Assume e.g. that \eqref{eq14} holds. We define
\[
u^i(x):= u(x+x^i) \quad \text{and} \quad v^i(x):= v(x+x^i).
\]
The sequence $\{(u^i,v^i)\}$ is uniformly bounded in $W^{1,\infty}(\R^N)$, and hence by elliptic regularity $(u^i,v^i) \to (u^\infty,v^\infty)$ in $\mathcal{C}^2_{loc}(\R^N)$ up to a subsequence and via a diagonal process, where $(u^\infty, v^\infty)$ is still a solution to \eqref{system}. By the convergence, we have
\[
\pa_N u^\infty \ge 0   \quad \text{and} \quad \pa v^\infty_N \le 0 \quad \text{in $\R^N$},
\]
and $\pa_N u^\infty(0)=0$. Furthermore,
\[
-\Delta \left(\pa_N u^\infty \right) +(3 (u^\infty)^2  + \Lambda (v^\infty)^2 -1) \left(\pa_N u^\infty \right) = -2 \Lambda u^\infty v^\infty \left(\pa_N v^\infty \right) \ge 0 \quad \text{in $\R^N$}.
\]
The strong maximum principle implies that either $\pa_N u^\infty>0$ or $\pa_N u^\infty \equiv 0$. The former one is in contradiction with the fact that $\pa_N u^\infty(0)=0$, the latter one is in contradiction with assumption \eqref{uniform limit}, which is also satisfied by the limit profile $(u^\infty,v^\infty)$ since $(x_N^i)$ is bounded. Thus, \eqref{eq14} cannot occur. A similar argument shows that also \eqref{eq15} does not hold, and completes the proof of claim \eqref{th step 1}.

Now we claim that
\begin{equation}\label{th step 2}
\text{The map $\nu \mapsto (\pa_\nu u, \pa_\nu v)$ is in $\mathcal{C}^{0,1} \left(\mathbb{S}^{N-1}, \left(\mathcal{C}^0(\R^N)\right)^2\right)$}.
\end{equation}
This is a simple consequence of the globlal Lipschitz continuity of $(u,v)$, which implies that
\[
\left|\frac{\pa u}{\pa \nu_1}(x) -\frac{\pa u}{\pa \nu_2}(x) \right| + \left|\frac{\pa v}{\pa \nu_1}(x) -\frac{\pa v}{\pa \nu_2}(x) \right| \le 2 C |\nu_1-\nu_2|
\]
for every $x \in \R^N$.

Combining \eqref{th step 1} and \eqref{th step 2}, the thesis follows.
\end{proof}

\begin{lem}\label{lem: step 3}
$u$ is strictly increasing and $v$ is strictly decreasing with respect to all the unit vectors of an open neighborhood of $e_N$ in $\mathbb{S}^{N-1}$.
\end{lem}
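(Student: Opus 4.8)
The plan is to upgrade the \emph{local} monotonicity with respect to directions near $e_N$ (obtained in Lemma~\ref{lem: step 1 2} on the slab $\overline{S_\sigma}$) to \emph{global} monotonicity on all of $\R^N$, for every direction in some open neighbourhood of $e_N$. The natural tool is a sliding/moving-plane type argument in the direction $\nu$, exploiting the uniform limits \eqref{uniform limit} to start the procedure far from the slab. Concretely, fix $\nu \in \mathcal{O}_{e_N}$ (with $\mathcal{O}_{e_N}$ possibly shrunk) and for $t \ge 0$ compare $(u,v)$ with the translate $(u(\cdot + t\nu), v(\cdot + t\nu))$; I would show that $u(x+t\nu) \ge u(x)$ and $v(x+t\nu) \le v(x)$ for all $x$ and all $t \ge 0$, which upon differentiating in $t$ at $t = 0^+$ gives $\pa_\nu u \ge 0$, $\pa_\nu v \le 0$, and then the strict inequalities follow from the strong maximum principle exactly as in the proof of Proposition~\ref{prop: mon in x_N}.

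First I would record that, by assumption \eqref{uniform limit}, there is $\sigma_0>0$ such that $u$ is close to $1$ and $v$ close to $0$ on $\{x_N > \sigma_0\}$, and $u$ close to $0$, $v$ close to $1$ on $\{x_N < -\sigma_0\}$; fix $\sigma > \sigma_0$. Since $\nu$ is close to $e_N$, translating by $t\nu$ with $t$ large pushes points of the slab $\overline{S_\sigma}$ deep into the region $\{x_N > \sigma_0\}$, so that $u(x+t\nu)$ is near $1 \ge u(x)$ and $v(x+t\nu)$ near $0 \le v(x)$ there; outside the slab the one-sided control from \eqref{uniform limit} handles the comparison directly. This should show that the set $\mathcal{T}:=\{t \ge 0 : u(\cdot+s\nu)\ge u \text{ and } v(\cdot+s\nu)\le v \text{ on }\R^N \text{ for all } s \ge t\}$ is nonempty. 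Then I would let $\bar t := \inf \mathcal{T}$ and argue $\bar t = 0$ by contradiction: if $\bar t > 0$, a compactness/translation argument (as in the $\tilde\lambda$-lemma above) produces a limiting configuration where equality is attained at an interior point, contradicting the strong maximum principle applied to the linear inequality satisfied by $u(\cdot+\bar t\nu)-u$ (with the coupling term $\Lambda(v^2 - v(\cdot+\bar t\nu)^2)u(\cdot+\bar t\nu) \ge 0$ having the right sign on the region where the comparison holds). Here the gradient estimates $|\nabla u|, |\nabla v| \in L^\infty$ and the uniformity in \eqref{uniform limit} are what keep the limiting point from escaping to infinity.

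The delicate point — and I expect this to be the main obstacle — is making the moving-plane/sliding step \emph{start} and \emph{close} in a genuinely non-cooperative system: the equations for $u$ and $v$ are coupled with the ``wrong'' sign, so one cannot treat the two inequalities separately. The key observation that saves the argument is that the coupling term has a favourable sign \emph{once both inequalities are known to hold}: on the region where $v(\cdot+t\nu)\le v$, the term $\Lambda(v^2-v(\cdot+t\nu)^2)\,u(\cdot+t\nu)$ in the equation for $u(\cdot+t\nu)-u$ is non-negative, and symmetrically for the $v$-equation; thus the two inequalities reinforce each other and one can run a simultaneous argument on $((u(\cdot+t\nu)-u)^-, (v-v(\cdot+t\nu))^-)$, much as in Lemma~\ref{monot at infinity}, using a cut-off and the auxiliary Lemma~\ref{bLe:L(R)} to absorb the gradient terms at infinity. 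The role of Lemma~\ref{lem: step 1 2} is precisely to provide, on the bounded slab $\overline{S_\sigma}$, the strict monotonicity $\pa_\nu u > 0$, $\pa_\nu v < 0$ that anchors the sliding and prevents the infimum $\bar t$ from being positive due to contact \emph{inside} the slab; contact outside the slab is excluded by \eqref{uniform limit}. Once $\pa_\nu u \ge 0$ and $\pa_\nu v \le 0$ on $\R^N$ are established for all $\nu$ near $e_N$, the strong maximum principle upgrades these to the strict inequalities claimed, completing the proof of Lemma~\ref{lem: step 3}.
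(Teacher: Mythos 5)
Your proposal takes a genuinely different route from the paper: you slide in the direction $\nu$ and compare $(u,v)$ with its translates $(u(\cdot+t\nu),v(\cdot+t\nu))$, whereas the paper never slides. It works directly with the linearized system \eqref{linearized} for $(\pa_\nu u,\pa_\nu v)$: Lemma \ref{lem: step 1 2} gives the strict signs on the closed slab $\overline{S_\sigma}$, hence $(\pa_\nu u)^-=0$ and $(\pa_\nu v)^+=0$ on $\{x_N=\pm\sigma\}$, and one then tests with $(\pa_\nu u)^-\varphi_R^2$ and $(\pa_\nu v)^+\varphi_R^2$ in each outer half-space, where $\sigma$ is chosen so that the zero-order coefficients $c_1,c_2$ are $\le-\eps$ and the coupling factor ($\Lambda v$ above, $\Lambda u$ below) is small; Lemma \ref{bLe:L(R)} then forces $(\pa_\nu u)^-\equiv 0$, $(\pa_\nu v)^+\equiv 0$ outside the slab, and the strong maximum principle gives strictness. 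No starting step, no infimum over a translation parameter, and no compactness argument are needed.

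As written, your sliding scheme has genuine gaps, all located in the outer regions $\{|x_N|>\sigma\}$, which is precisely where the work is. First, the claim that ``outside the slab the one-sided control from \eqref{uniform limit} handles the comparison directly'' is false: for $x_N<-\sigma$ both $u(x+t\nu)$ and $u(x)$ are close to $0$ (and both values of $v$ close to $1$), and for $x_N>\sigma$ both values of $u$ are close to $1$; two quantities tending to the same limit carry no ordering, so \eqref{uniform limit} gives nothing pointwise there. In the moving-plane Lemma \ref{monot at infinity} the reflected point lies near the \emph{opposite} equilibrium, which is what makes both the start and the non-escape argument work; in the sliding setting this feature is lost. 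Second, for the same reason the uniform limits do \emph{not} prevent the failure points from escaping to infinity when you argue $\bar t=0$: if $x^k_N\to\pm\infty$, then $u(x^k+s_k\nu)$ and $u(x^k)$ tend to the same value, there is no contradiction analogous to \eqref{eq8}, and the blown-up limit is a constant pair that trivially equals its translate, so the strong maximum principle yields nothing. Third, even on the slab the starting inequality ``$u(x+t\nu)$ near $1\ge u(x)$'' requires $\sup_{\overline{S_\sigma}}u<1$, which is true but needs a translation-compactness plus strong-maximum-principle argument (it does not follow from $u<1$ pointwise); and for intermediate $t$ Lemma \ref{lem: step 1 2} does not give you the slab comparison, because the segment from $x$ to $x+t\nu$ leaves $\overline{S_\sigma}$, where you have as yet no sign information on $\pa_\nu u$. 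Your third paragraph gestures at the correct repair — run the energy argument of Lemma \ref{monot at infinity}, with cut-offs and Lemma \ref{bLe:L(R)}, on the differences of translates in the outer half-spaces, using the slab control to kill the boundary terms on $\{x_N=\pm\sigma\}$ — but once that machinery is set up it is both simpler and sufficient to apply it to the derivatives $\pa_\nu u$, $\pa_\nu v$ themselves, which is exactly the paper's proof; as the proposal stands, the outer-region comparison is asserted rather than proved, and that is the heart of the lemma.
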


\begin{proof}
First, we write down the equations satisfied by the directional derivatives $\pa_\nu u = u_\nu$ and $\pa_\nu v= v_\nu$:
\begin{equation}\label{linearized}
\begin{cases}
-\Delta u_\nu = c_1(x) u_\nu -2\Lambda uv v_\nu \\
-\Delta v_\nu = c_2(x) v_\nu -2\Lambda uv u_\nu
\end{cases} \text{in $\R^N$},
\end{equation}
where
\[
c_1(x):= 1-\Lambda v^2(x) -3 u^2(x), \quad \text{and} \quad c_2(x):= 1-\Lambda u^2(x) -3 v^2(x).
\]
We choose $\s>0$ sufficiently large, in such a way that for a positive small $\eps$
\begin{equation}\label{cond on s}
\sup_{S_\s^c} c_1 \le -\eps, \qquad \sup_{S_\s^c} c_2 \le -\eps, \qquad \Lambda \sup_{\{x_N>\sigma\}} v < \frac{\eps}2, \qquad \Lambda \sup_{\{x_N<-\sigma\}} u < \frac{\eps}2,
\end{equation}
with $S_\s$ as in Lemma \ref{lem: step 1 2}.
The existence of such $\s$ is guaranteed by \eqref{uniform limit} and by the fact that $\Lambda>1$.

Let also $\mathcal{O}_{e_N}$ be the neighborhood of $e_N$ given by Lemma \ref{lem: step 1 2}. We test the first equation in \eqref{linearized} with $u_\nu^- \varphi_R^2$ in $\Sigma_\sigma=\{x_N > \sigma\}$, where $\varphi_R$ is chosen exactly as in Lemma \ref{monot at infinity}: writing $\mathcal{C}_{R}:= \Sigma_\sigma \cap B_{R}$, and using the fact that $u_\nu \ge 0$ on $\{x_N = \sigma\}$, we easily obtain
\begin{align*}
\int_{\mathcal{C}_R} |\nabla u_\nu^-|^2 & \le -2 \int_{\mathcal{C}_{2R}} u_\nu^- \varphi_R \nabla u_\nu^- \cdot \nabla \varphi_R  \\
& + \int_{\mathcal{C}_{2R}} c_1 (u_\nu^- \varphi_R)^2  +2 \Lambda \int_{\mathcal{C}_{2R}} \varphi_R^2 u v u_\nu^- v_\nu^+ \\
& \le \theta \int_{\mathcal{C}_{2R}} |\nabla u_\nu^-|^2 + \left( \frac{1}{\theta R^2} + \sup_{\Sigma_\s} c_1 \right) \int_{\mathcal{C}_{2R}} (u_\nu^- \varphi_R)^2 \\
& + \Lambda \sup_{\Sigma_\s} v \int_{\mathcal{C}_{2R}} \varphi_R^2 \left[(u_\nu^-)^2 + (v_\nu^+)^2\right],
\end{align*}
where $0<\theta<2^{-N}$. In a similar way, we also deduce that
\begin{align*}
\int_{\mathcal{C}_R} |\nabla v_\nu^+|^2 & \le \theta \int_{\mathcal{C}_{2R}} |\nabla v_\nu^+|^2 + \left( \frac{1}{\theta R^2} + \sup_{\Sigma_\s} c_2 \right) \int_{\mathcal{C}_{2R}} (v_\nu^+ \varphi_R)^2 \\
& + \Lambda \sup_{\Sigma_\s} v \int_{\mathcal{C}_{2R}} \varphi_R^2\left[(u_\nu^-)^2 + (v_\nu^+)^2\right].
\end{align*}
Summing up the terms in the above inequalities, we infer that
\begin{align*}
\int_{\mathcal{C}_R} |\nabla u_\nu^-|^2 + |\nabla v_\nu^+|^2 &\le \theta \int_{\mathcal{C}_{2R}} |\nabla u_\nu^-|^2 + |\nabla v_\nu^+|^2  + \left(  \frac{1}{\theta R^2} + \sup_{\Sigma_\s} c_1 + \Lambda \sup_{\Sigma_\s} v\right) \int_{\mathcal{C}_{2R}} (u_\nu^- \varphi_R)^2 \\
& + \left(  \frac{1}{\theta R^2} + \sup_{\Sigma_\s} c_2 + \Lambda \sup_{\Sigma_\s} v\right) \int_{\mathcal{C}_{2R}} (v_\nu^+ \varphi_R)^2 \\
 & \le  \theta \int_{\mathcal{C}_{2R}} |\nabla u_\nu^-|^2 + |\nabla v_\nu^+|^2
 \end{align*}
for $R$ sufficiently large, where we used estimate \eqref{cond on s} and assumption \eqref{uniform limit}. As a consequence, Lemma \ref{bLe:L(R)} is applicable, and implies that $u_\nu \ge 0$ and $v_\nu \le 0$ in $\Sigma_\s=\{x_N > \sigma\}$. Arguing exactly in the same way, we can show that the same conditions are satisfied in $\{x_N <-\sigma\}$, and finally by Lemma \ref{lem: step 1 2} we deduce that $u_\nu \ge 0$ and $v_\nu \le 0$ in $\R^N$ for every $\nu \in \mathcal{O}_{e_N}$, with both $u_\nu \not \equiv 0$ and $v_\nu \not \equiv 0$ by \eqref{uniform limit}, whence the thesis follows.
\end{proof}

\begin{proof}[Proof of Proposition \ref{prop: symmetry}]
Here we can essentially apply the same argument used in step 4 of Proposition 6.1 in \cite{FaSo}. We report the details for the sake of completeness. Let $\Omega$ be the set of the directions $\nu \in \S^{N-1}_+$ for which there exists an open neighborhood $\mathcal{O}_\nu \subset \S^{N-1}_+$ of $\nu$ such that
\[
\pa_\mu u>0 \quad \text{and} \quad \pa_\mu v<0 \quad \text{in $\R^N$}, \ \forall \mu \in \mathcal{O}_\nu.
\]
The set $\Omega$ is open by definition, and contains $e_N$ by Lemma \ref{lem: step 3}. Since $\S^{N-1}_+$ is arc-connected, if we can show that $\pa \Omega \cap \S^{N-1}_+=\emptyset$, then we conclude that $\Omega=\S^{N-1}_+$, as desired. Thus, let us suppose by contradiction that $\bar \nu \in \pa \Omega \cap \S^{N-1}_+$ (notice in particular that $\langle e_N, \bar \nu\rangle >0$). By definition, there exists $(\nu_n) \subset \Omega$ such that $\nu_n \to \bar \nu$. As
\[
\pa_{\nu_n} u >0 \quad \text{and} \quad \pa_{\nu_n} v<0 \quad \text{in $\R^N$}, \ \forall n,
\]
by continuity
 \[
\pa_{\bar \nu} u  \ge 0 \quad \text{and} \quad \pa_{\bar \nu} v\le 0 \quad \text{in $\R^N$}.
\]
By the strong maximum principle, recalling that $(u_{\bar \nu}, v_{\bar \nu})$ solves \eqref{linearized}, either $u_{\bar \nu} \equiv 0$ or $u_{\bar \nu}>0$ in $\R^N$, and analogously either $v_{\bar \nu} \equiv 0$ or $v_{\bar \nu}<0$ in $\R^N$. But the alternatives $u_{\bar \nu}  \equiv 0$ and $v_{\bar \nu} \equiv 0$ are in contradiction with assumption \eqref{uniform limit}, since $\bar \nu$ is not orthogonal to $e_N$, and hence
\begin{equation}\label{2431}
\pa_{\bar \nu} u>0\quad \text{and} \quad  \pa_{\bar \nu} v<0 \qquad \text{in $\R^N$}.
\end{equation}
Having established \eqref{2431}, it is possible to adapt the same proof of Lemmas \ref{lem: step 1 2} and \ref{lem: step 3}, with $\bar \nu$ instead of $e_N$, to deduce that $u_\nu >0$ and $v_\nu<0$ in $\R^N$ in all the direction of an open neighborhood $\mathcal{O}_{\bar \nu}$ of $\bar \nu$ in $\mathbb{S}^{N-1}_+$. Thus, we have that $\bar \nu \in \Omega \cap \pa \Omega$, in contradiction with the openness of $\Omega$.  This shows that $\pa \Omega \cap \S^{N-1}_+=\emptyset$ which, as already observed, implies $\Omega = \S^{N-1}_+$.
\end{proof}

We are ready to proceed with the:

\begin{proof}[Conclusion of the proof of Theorem \ref{thm: main 1}]
By Proposition \ref{prop: symmetry}, we immediately obtain both $\pa_{\tau} u \equiv 0$ and  $\pa_{\tau} v \equiv 0$ for every $\tau \in \S^{N-1}$ orthogonal to $e_N$.
\end{proof}

\begin{proof}[Proof of Corollary \ref{cor: uniqueness}]
By Theorem \ref{thm: main 1}, any solution to \eqref{system}-\eqref{uniform limit} is $1$-dimensional and has montone components. Therefore, the thesis follows by the uniqueness (modulo translations) of the 1D monotone heteroclinic connections proved in \cite{AftSou}.
\end{proof}

\begin{proof}[Proof of Corollary \ref{cor: sharp}]
If $(u-v) \to 1$ as $x_N \to +\infty$, then by $0 <u,v<1$ we immediately deduce that $u \to 1$ and $v \to 0$. The same discussion applies for the limit as $x_N \to -\infty$, and hence the corollary follows by Theorem \ref{thm: main 1}. 
\end{proof}

\section{The case $ \Lambda  \in (0,1]$}\label{sec: la < 1}

We discuss separately the cases $\Lambda \in (0,1)$ and $\Lambda=1$, starting from the former one.

\begin{proof}[Proof of Theorem \ref{sopra<1}]
By item ($iii$) of Theorem \ref{universal upper bound} we know that $ - v^2 \ge u^2 - \frac{2}{1+ \Lambda} $ on $\R^N$, and thus $u$ solves
\begin{equation}\label{sopraeq}
\begin{cases}
-\Delta u\, \ge u-u^3 + \Lambda u\left(u^2 - \frac{2}{1 +\Lambda}\right) = (1 - \Lambda) u \left(\frac{1}{1+ \Lambda} - u^2\right)& \text{in}\quad\mathbb{R}^N  \\
\quad u > 0 & \text{in}\quad\mathbb{R}^N
\end{cases}
\end{equation}

Given $ \gamma \in \left(0, \frac{1} {{\sqrt {1+\Lambda}}}\right)$, there is $ \delta =\delta(\Lambda, \gamma)>0$ such that
\begin{equation}\label{soprasol}
(1 - \Lambda) t \left(\frac{1}{1+ \Lambda} - t^2\right) \ge \delta t  \qquad \forall t \in [0, \gamma]
\end{equation}
here we have used, in a crucial way, that $ 1 - \Lambda >0$. Therefore, there is $ R=R(\delta)>0$ such that the principal eigenvalue $ \lambda_1=\lambda_1(B_R)$ of $ -\Delta $ in the open ball $B_R$ under zero Dirichlet boundary conditions satisfies $ \lambda_1 < \delta$.
Let $ \phi_1$ be the eigenfunction of $ -\Delta $ in $B_R$ such that $ \max_{B_R} \phi_1=1$; then, for every $ \varepsilon \in (0, \min\{ \gamma, \min_{\overline{B_R}} u\})$, 
%{\color{red} non dovrebbe essere $\varepsilon \in (0, \min\{ \gamma, \min_{\overline{B_R}} %u\})$ per poter poi applicare la \eqref{soprasol}? Comunque non cambia nulla} 
the function $w = \varepsilon \phi_1 $ satisfies
%\begin{equation}\label{sottosol}
\[
\begin{cases}
0 < w \le \varepsilon < u & \text{in}\quad B_R \\
-\Delta w \le (1 - \Lambda) w \left(\frac{1}{1+ \Lambda} - w^2\right)& \text{in}\quad  B_R \\
w = 0 & \text{in}\quad \partial B_R \\
\end{cases}
\]
%\end{equation}
since $\lambda_1 w \le \delta w \le (1 - \Lambda) w \left(\frac{1}{1+ \Lambda} - w^2\right)$ thanks to \eqref{soprasol}. Hence, in view of \eqref{sopraeq}, we can use either the sliding method (cf. e.g. \cite[Lemma 3.1]{BCN}) or the sweeping method (cf. e.g. \cite{Ser}) to infer that $ u \ge \varepsilon >0$ on the whole of space $ \R^N$.  With this information in our hands we can apply Kato's inequality to \eqref{sopraeq} to get that
\[
\Delta \left (\frac{1} {{\sqrt {1+\Lambda}}} - u \right)^+ \ge
(1 - \Lambda) \varepsilon \left [\left(\frac{1} {{\sqrt {1+\Lambda}}} - u \right)^+ \right ]^2
\]
which, in turn, provides $\Big (\frac{1} {{\sqrt {1+\Lambda}}} - u \Big)^+ \le 0$ on $ \R^N$ and so $ u^2 \ge \frac{1} {1+\Lambda}$ on $\R^N$. Since the system is symmetric in $u$ and $v$, the previous argument also gives that  $ v^2 \ge \frac{1} {1+\Lambda}$ on $\R^N$.  Comparing these informations with $ u^2 + v^2 \le \frac{2}{1+ \Lambda} $ on $\R^N$, we immediately infer that $ u^2 =v^2 = \frac{1} {1+\Lambda}$ on $\R^N$, concluding the proof.
\end{proof}

\begin{proof}[Proof of Theorem \ref{sopra1}]
By item ($ii$) of Theorem \ref{universal upper bound} we know that $u^2 + v^2 \le 1$ on $\R^N$, and hence, being $\Lambda=1$, both $u$ and $v$ are positive super-harmonic functions in $ \R^N$. Since $ N \le 2$, they must be constant.
\end{proof}

\section{The (special) case $ \Lambda =3$}\label{sec: la = 3}

In this section we first study the special system \eqref{sistema3}. Afterwards, we obtain the improved estimates of Theorem \ref{thm: la non 3} regarding the case $\Lambda \neq 3$. 

The first step in our analysis is represented by the following statement.

\begin{proposition}\label{Lam=3}
Assume $ N \ge 1$. A pair $(u,v)$ (not necessarily non-negative) is a solution of \eqref{sistema3} if and only if there exist $w_1$ and $w_2$, solutions to the Allen-Cahn equation $ - \Delta w = w-w^3$ in $ \R^N$, such that 
\begin{equation}\label{sistemaAC}
\begin{cases}
u = \frac{w_1 + w_2}{2}&\text{in}\quad\mathbb{R}^N  \\
v = \frac{w_1 - w_2}{2}& \text{in}\quad\mathbb{R}^N.
\end{cases}
\end{equation}
\end{proposition}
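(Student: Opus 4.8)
The plan is to prove both implications by a simple change of variables. The key observation is that the system \eqref{sistema3} with $\Lambda = 3$ is precisely the one that diagonalizes under the linear substitution $w_1 = u+v$, $w_2 = u-v$ (equivalently $u = (w_1+w_2)/2$, $v = (w_1-w_2)/2$). So the strategy is: starting from a solution $(u,v)$, set $w_1 := u+v$ and $w_2 := u-v$, add and subtract the two equations of \eqref{sistema3}, and check that the cubic terms recombine exactly into $w_i - w_i^3$. Conversely, given two Allen--Cahn solutions $w_1, w_2$, define $u, v$ by \eqref{sistemaAC} and run the computation in reverse.

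\textbf{The computation.} For the forward direction, adding the two equations of \eqref{sistema3} gives
\[
-\Delta(u+v) = (u+v) - (u^3 + v^3) - 3uv(u+v).
\]
Since $u^3 + v^3 + 3uv(u+v) = (u+v)^3$, the right-hand side is exactly $(u+v) - (u+v)^3$, i.e. $w_1$ solves $-\Delta w_1 = w_1 - w_1^3$. Subtracting the two equations gives
\[
-\Delta(u-v) = (u-v) - (u^3 - v^3) - 3uv\,v + 3u\,uv = (u-v) - u^3 + v^3 + 3u^2 v - 3uv^2,
\]
and since $u^3 - v^3 - 3u^2v + 3uv^2 = (u-v)^3$, the right-hand side equals $(u-v) - (u-v)^3$, so $w_2$ solves the Allen--Cahn equation as well. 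The converse is the same algebra read backwards: with $u = (w_1+w_2)/2$ and $v = (w_1-w_2)/2$, one has $-\Delta u = \tfrac12(-\Delta w_1 - \Delta w_2) = \tfrac12\big[(w_1 - w_1^3) + (w_2 - w_2^3)\big]$, and one checks that $\tfrac12(w_1^3 + w_2^3) = u^3 + 3uv^2$ (using $w_1 = u+v$, $w_2 = u-v$), which recovers the first equation of \eqref{sistema3}; the second equation follows symmetrically.

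\textbf{Remarks on obstacles and regularity.} There is essentially no analytic obstacle here — the result is purely algebraic once one knows the right linear combination, and the only mild point worth a sentence is regularity: an a priori distributional or merely $L^3_{loc}$ solution $(u,v)$ transfers to $w_1, w_2$, which are then bounded (by Theorem \ref{universal upper bound}) distributional solutions of Allen--Cahn, hence smooth by elliptic regularity; so all the manipulations above are justified classically a posteriori, and it suffices to carry them out pointwise. I would state the correspondence at the level of smooth functions and note that both sides of the equivalence automatically consist of smooth functions by the universal bounds plus standard elliptic estimates. The identities $u^3 + v^3 + 3uv(u+v) = (u+v)^3$ and $u^3 - v^3 - 3u^2v + 3uv^2 = -(u-v)^3 + \dots$ — more cleanly, $(u\pm v)^3 = u^3 \pm 3u^2v + 3uv^2 \pm v^3$ — are the whole content of the proof, and I would just display these binomial expansions and let the reader match terms.
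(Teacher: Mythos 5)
Your proposal is correct and follows essentially the same route as the paper: add and subtract the two equations, use the binomial expansion of $(u\pm v)^3$ to see that $w_1=u+v$ and $w_2=u-v$ solve the Allen--Cahn equation, and reverse the algebra for the converse. The regularity remark (smoothness via the universal bounds and elliptic estimates) is a harmless addition that the paper leaves implicit.
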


\begin{proof}
Since $ \Lambda =3$, by adding the two equations in \eqref{Lam=3} we immediately see that $u+v$ solves the Allen-Cahn equation
\[
\Delta (u+v) = (u+v)^3 - (u+v) \qquad \text{on} \qquad \R^N.
\]
In a similar way we see that $u-v$ solves the Allen-Cahn equation
\[
\Delta (u-v) = (u-v)^3 - (u-v) \qquad \text{on} \qquad \R^N.
\]
The desired conclusion the follows by setting $ w_1 =u+v$ and $ w_2 = u-v$. 

Conversely, it is a straightforward computation to see that the pair $(\frac{w_1 + w_2} {2}, \frac{w_1 - w_2}{2})$ is a solution of \eqref{Lam=3} if $w_1$ and $w_2$ solve the Allen-Cahn equation $ - \Delta w = w-w^3$ in $ \R^N$. 
\end{proof}

This permits us to prove Theorem \ref{sopra3}.

\begin{proof}[Proof of Theorem \ref{sopra3}]
We observe that $ w_1 = u+v \to 1 $ as $x_N \to \pm \infty$, while $ w_2 = u-v \to 1$ as $x_N \to +\infty$ and $ w_2 = u-v \to -1 $ as $x_N \to -\infty$. Theorem 1.1 of \cite{Fa} implies that $w_1 \equiv 1$ on $\R^N$ and $ w_2 = \tanh\big (\frac {x_N +\alpha }{\sqrt 2}\big)$
for some $ \alpha \in \R$. The conclusion then follows from Proposition \ref{Lam=3}.  
\end{proof}

Now we focus on positive solutions.

\begin{proposition}\label{SolPos}
Assume $ N \ge 1$. A pair $(u,v)$ is a solution of \eqref{sistema3pm} if and only if 
\[
u = \frac{1 + w_2}{2}, \qquad v = \frac{1 -w_2}{2} = 1 - u
\]
where $w_2$ is a solution of the Allen-Cahn equation $ - \Delta w = w-w^3$ in $ \R^N$, with $ w_2 \not \equiv \pm 1$. 
\end{proposition}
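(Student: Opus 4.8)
The plan is to characterize positive solutions of \eqref{sistema3pm} by combining Proposition \ref{Lam=3} with the universal bound of Theorem \ref{universal upper bound}(i) and a rigidity argument for the Allen-Cahn equation. First I would apply Proposition \ref{Lam=3}: any solution $(u,v)$ of the system \eqref{sistema3pm} (forgetting the sign for a moment) can be written as $u=\frac{w_1+w_2}{2}$, $v=\frac{w_1-w_2}{2}$ with $w_1=u+v$ and $w_2=u-v$ both solving the Allen-Cahn equation $-\Delta w=w-w^3$ in $\R^N$. Now I use positivity: $u,v>0$ forces $w_1=u+v>0$ everywhere, so $w_1$ is a \emph{positive} entire solution of the Allen-Cahn equation. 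By Theorem \ref{universal upper bound}(i) we already know $|u|,|v|\le 1$, hence $0<w_1\le 2$; more importantly, it is classical that the only nonnegative bounded entire solution of $-\Delta w = w-w^3$ in $\R^N$ is $w\equiv 1$ (indeed $w\le 1$ by the maximum principle as in the proof of Theorem \ref{universal upper bound}(i) applied to $w-1$, and then $w$ is a positive solution bounded between $0$ and $1$; the strong maximum principle together with a sweeping/sliding argument — or simply the observation that $1-w$ is nonnegative, bounded, and superharmonic-type — pins $w\equiv 1$; alternatively invoke Theorem 1.1 of \cite{Fa} as in the proof of Theorem \ref{sopra3}). Therefore $w_1\equiv 1$.

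With $w_1\equiv 1$ in hand, the formulas from Proposition \ref{Lam=3} immediately give $u=\frac{1+w_2}{2}$ and $v=\frac{1-w_2}{2}=1-u$, where $w_2=u-v$ solves the Allen-Cahn equation. It remains to record the constraint coming from positivity of $u$ and $v$: $u>0$ and $v>0$ translate exactly into $-1<w_2<1$ pointwise, and in particular $w_2\not\equiv\pm 1$. (Conversely, if $w_2$ is \emph{any} Allen-Cahn solution with $w_2\not\equiv\pm 1$, then by the strong maximum principle $|w_2|<1$ strictly — since $|w_2|\le 1$ by the maximum principle and equality at a point would force $w_2\equiv\pm1$ — so $u=\frac{1+w_2}{2}$ and $v=\frac{1-w_2}{2}$ are both strictly positive, and by Proposition \ref{Lam=3} they solve \eqref{sistema3pm}. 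This gives the reverse implication.)

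For the converse direction I would simply verify, using the ``if'' part of Proposition \ref{Lam=3} with $w_1\equiv 1$, that $\bigl(\frac{1+w_2}{2},\frac{1-w_2}{2}\bigr)$ solves the system \eqref{sistema3}; the positivity $u,v>0$ then follows from the strict bound $|w_2|<1$ noted above, which holds precisely because $w_2\not\equiv\pm 1$. Thus the two conditions — $(u,v)$ solves \eqref{sistema3pm}, and $u=\frac{1+w_2}{2}$, $v=\frac{1-w_2}{2}$ with $w_2$ Allen-Cahn and $w_2\not\equiv\pm1$ — are equivalent.

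The only genuinely non-routine point is the Liouville-type fact that a nonnegative entire solution of the Allen-Cahn equation must be identically $1$ (so that $w_1\equiv 1$); everything else is bookkeeping with the linear change of variables of Proposition \ref{Lam=3} and the strong maximum principle. I expect this to be handled exactly as in the proof of Theorem \ref{sopra3}, i.e.\ by a Kato-inequality argument on $w_1-1$ (as in the proof of Theorem \ref{universal upper bound}(i)) showing $w_1\le 1$, followed by a sliding/sweeping comparison with small multiples of a Dirichlet eigenfunction (exactly as in the proof of Theorem \ref{sopra<1}) to force $w_1\ge 1$, hence $w_1\equiv 1$ — or, more economically, by directly citing \cite{Fa}.
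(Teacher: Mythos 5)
Your proposal is correct and follows essentially the same route as the paper: decompose via Proposition \ref{Lam=3}, use $w_1=u+v>0$ together with the Liouville-type result for positive entire Allen--Cahn solutions (Theorem 1.1 of \cite{Fa}, which you also cite as the economical option) to get $w_1\equiv 1$, and then use the strict bound $|w_2|<1$ for non-constant Allen--Cahn solutions to handle positivity and the condition $w_2\not\equiv\pm 1$. The only difference is cosmetic: where the paper cites \cite{Fa} and \cite{FarFEQL}, you additionally sketch self-contained substitutes (Kato's inequality plus the sweeping argument of Theorem \ref{sopra<1}, and the strong maximum principle), which are sound.
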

\begin{proof}
We observe that $ w_1 = u+v >0 $ in $ \R^N$ and so, Theorem 1.1 of \cite{Fa} implies that $w_1 \equiv 1$ on $\R^N$. The conclusion then follows from Proposition \ref{Lam=3}, by also recalling that any non-constant entire solution $w$ of the Allen-Cahn equation satisfies the universal bound $ \vert w\vert < 1$ everywhere on $ \R^N$ (\cite{FarFEQL}). 
\end{proof}

As a consequence:

\begin{proof}[Proof of Theorem \ref{thm: monot 3}]
Item ($i$) is an immediate consequence of Proposition \ref{SolPos}, where we proved that $ v = 1 -u$.

For item ($ii$), we observe that $ w_2 = u-v = 2u -1 $ in $ \R^N$, again by Proposition \ref{SolPos}. Therefore, $ \partial_N w_2 >0$ on $ \R^N$, and so the known results regarding De Giorgi's conjecture for the Allen-Cahn equation \cite{BCN2,GG,AmCa} tell us that
\[
w_2(x) = \tanh\Big (\frac {a\cdot x +\alpha }{\sqrt 2}\Big),
\]
for some unit vector $a$ such that $ a_N >0$ and some $ \alpha \in \R$. In the same way, by \cite{Sa} we can prove item ($iii$).

To prove the last claim of the Theorem we recall (see \cite{DKW}) that in dimension $ N>8$ there exists an entire solution $w_2$ of the Allen-Cahn equation satisfying $\partial_N w_2 >0$ on $\R^N$ and
\[
\lim_{x_N \to +\infty} w_2(x',x_N) =1, \quad \lim_{x_N \to -\infty} w_2(x',x_N) = -1
\]
point-wisely for every $x' \in \R^{N-1}$, and which is not 1-dimensional. Thus, an application of Proposition \ref{Lam=3} with $w_1 =1$ and the above solution $ w_2$ yields that the pair $(\frac{1 + w_2}{2},\frac{1 - w_2}{2})$ is a solution of \eqref{sistema3pm} with all the desired properties. 
\end{proof}

Finally, we give the proof of Theorem \ref{thm: la non 3}.

\begin{proof}[Proof of Theorem \ref{thm: la non 3}] We observe that, for $ \Lambda > 0, $
\begin{equation}\label{cubo}
\Delta (u+v) = (u+v)^3 - (u+v)  + (\Lambda -3)uv(u+v) \qquad \text{in} \qquad \R^N.
\end{equation}
For $ \Lambda >3$, from \eqref{cubo} and Kato's inequality we get
\[
\Delta (u+v -1)^+ \ge [(u+v)^3 - (u+v)] {\mathds 1}_{\{ u+v-1>0\}}
\ge [(u+v-1)^+]^3 \qquad \text{in} \qquad \R^N,
\]
and so $ u+v \le 1$ on $ \R^N$. Now, if $u(x_0) + v(x_0) = 1, \, x_0 \in \R^N$, then
\[
0 \ge (\Delta(u+v))(x_0) = 1 - 1 +  (\Lambda -3)u(x_0)v(x_0)(1) > 0
\]
a contradiction.

For $ \Lambda  \in (0,3)$, from \eqref{cubo} we deduce that
\[
- \Delta (u+v)  \ge (u+v) - (u+v)^3 = (u+v) [1 - (u+v)^2] \,\, \text{in} \,\, \R^N
%[1 -(u+v)]^2(1 + (u+v)) + (1 -(u+v)^2) \,\, on \,\, \R^N
\]
and, by proceeding as in the proof of Theorem \ref{sopra<1} we first see that $ u+v \ge \varepsilon$ on $\R^N$, for some $\varepsilon >0$,  and then, by Kato's inequality, we infer that
\[
\Delta (1-(u+v))^+ \ge \varepsilon [(1-(u+v))^+]^2 \qquad \text{in} \qquad \R^N
\]
which provides $u+v \ge 1$. The strict inequality then follows as before.
\end{proof}

\end{document}